\tikzset{
  x=.7cm, y=.7cm,
  baseline=-.5ex,
  quadratic/.style={
    to path={
      (\tikztostart) .. controls
      ($#1!1/3!(\tikztostart)$) and ($#1!1/3!(\tikztotarget)$)
      .. (\tikztotarget)
    }
  },
  xlen/.style={
    x={(0pt,#1)}
  },
  ylen/.style={
    y={(#1,0pt)}
  },
}
\newcommand\saveTikzBox[3][]{%
  \expandafter\newsavebox\csname tikz@box@#2\endcsname%
  \expandafter\savebox\csname tikz@box@#2\endcsname{%
    \tikz[#1]{#3}%
  }%
}
\newcommand\useTikzBox[1]{\expandafter\usebox\csname tikz@box@#1\endcsname}
\newcommand\defTikzBox[3][]{%
  \saveTikzBox[#1]{#2}{#3}
  \expandafter\newcommand\csname#2\endcsname{\useTikzBox{#2}}
}
\newcommand\diagRiiiRightUParWWV{\reflectbox{\diagRiiiLeftUParWWV}}
\newcommand\diagCrossNegUp{\reflectbox{\diagCrossPosUp}}
\newcommand\diagRiiLeftUpWith[2]{{%
  \ooalign{%
    \diagRiiLeftUp\crcr%
    \hss\raisebox{1.3ex}[0pt][0pt]{\rlap{\hskip.6em$\scriptstyle#1$}}\hss\crcr%
    \hss\raisebox{-1.3ex}[0pt][0pt]{\rlap{\hskip.6em$\scriptstyle#2$}}\hss\crcr%
  }%
}}
\newcommand\diagRiiRightUpWith[2]{{%
  \ooalign{%
    \diagRiiRightUp\crcr%
    \hss\raisebox{1.3ex}[0pt][0pt]{\rlap{\hskip.6em$\scriptstyle#1$}}\hss\crcr%
    \hss\raisebox{-1.3ex}[0pt][0pt]{\rlap{\hskip.6em$\scriptstyle#2$}}\hss\crcr%
  }%
}}
\newcommand\diagRvFTwPosWith[2]{{%
  \ooalign{%
    \diagRvFTwPos\crcr%
    \hss\raisebox{1.3ex}[0pt][0pt]{\rlap{\hskip.6em$\scriptstyle#1$}}\hss\crcr%
    \hss\raisebox{-1.3ex}[0pt][0pt]{\rlap{\hskip.6em$\scriptstyle#2$}}\hss\crcr%
  }%
}}
\newcommand\diagRvFTwNegWith[2]{{%
  \ooalign{%
    \diagRvFTwNeg\crcr%
    \hss\raisebox{1.3ex}[0pt][0pt]{\rlap{\hskip.6em$\scriptstyle#1$}}\hss\crcr%
    \hss\raisebox{-1.3ex}[0pt][0pt]{\rlap{\hskip.6em$\scriptstyle#2$}}\hss\crcr%
  }%
}}
\newcommand\diagRiiiLeftMLRWith[3]{%
  {%
    \ooalign{%
      \diagRiiiLeftMLR\crcr%
      \hss\raisebox{1.3ex}[0pt][0pt]{\llap{$\scriptstyle#1$\hskip.8em}}\hss\crcr%
      \hss\raisebox{-1.3ex}[0pt][0pt]{\llap{$\scriptstyle#2$\hskip.8em}}\hss\crcr%
      \hss\rlap{\hskip.8em$\scriptstyle#3$}\hss%
    }%
  }%
}
\newcommand\diagRiiiRightMRL{\reflectbox{\diagRiiiLeftMLR}}
\newcommand\diagRiiiRightMRLWith[3]{%
  {%
    \ooalign{%
      \diagRiiiRightMRL\crcr%
      \hss\raisebox{1.3ex}[0pt][0pt]{\rlap{\hskip.8em$\scriptstyle#1$}}\hss\crcr%
      \hss\raisebox{-1.3ex}[0pt][0pt]{\rlap{\hskip.8em$\scriptstyle#2$}}\hss\crcr%
      \hss\llap{$\scriptstyle#3$\hskip.8em}\hss%
    }%
  }%
}
\newcommand\diagRiiiLeftMRLWith[3]{%
  {%
    \ooalign{%
      \diagRiiiLeftMRL\crcr%
      \hss\raisebox{1.3ex}[0pt][0pt]{\llap{$\scriptstyle#1$\hskip.8em}}\hss\crcr%
      \hss\raisebox{-1.3ex}[0pt][0pt]{\llap{$\scriptstyle#2$\hskip.8em}}\hss\crcr%
      \hss\rlap{\hskip.8em$\scriptstyle#3$}\hss%
    }%
  }%
}
\newcommand\diagRiiiRightMLR{\reflectbox{\diagRiiiLeftMRL}}
\newcommand\diagRiiiRightMLRWith[3]{%
  {%
    \ooalign{%
      \diagRiiiRightMLR\crcr%
      \hss\raisebox{1.3ex}[0pt][0pt]{\rlap{\hskip.8em$\scriptstyle#1$}}\hss\crcr%
      \hss\raisebox{-1.3ex}[0pt][0pt]{\rlap{\hskip.8em$\scriptstyle#2$}}\hss\crcr%
      \hss\llap{$\scriptstyle#3$\hskip.8em}\hss%
    }%
  }%
}
\newcommand\diagRiiiLeftLRMWith[3]{%
  {%
    \ooalign{%
      \diagRiiiLeftLRM\crcr%
      \hss\raisebox{1.3ex}[0pt][0pt]{\llap{$\scriptstyle#1$\hskip.8em}}\hss\crcr%
      \hss\raisebox{-1.3ex}[0pt][0pt]{\llap{$\scriptstyle#2$\hskip.8em}}\hss\crcr%
      \hss\rlap{\hskip.8em$\scriptstyle#3$}\hss%
    }%
  }%
}
\newcommand\diagRiiiRightRLM{\reflectbox{\diagRiiiLeftLRM}}
\newcommand\diagRiiiRightRLMWith[3]{%
  {%
    \ooalign{%
      \diagRiiiRightRLM\crcr%
      \hss\raisebox{1.3ex}[0pt][0pt]{\rlap{\hskip.8em$\scriptstyle#1$}}\hss\crcr%
      \hss\raisebox{-1.3ex}[0pt][0pt]{\rlap{\hskip.8em$\scriptstyle#2$}}\hss\crcr%
      \hss\llap{$\scriptstyle#3$\hskip.8em}\hss%
    }%
  }%
}
\newcommand\diagRiiiLeftRLMWith[3]{%
  {%
    \vphantom{\diagRiiiLeftRLM}%
    \ooalign{%
      \diagRiiiLeftRLM\crcr%
      \hss\raisebox{1.3ex}[0pt][0pt]{\llap{$\scriptstyle#1$\hskip.8em}}\hss\crcr%
      \hss\raisebox{-1.3ex}[0pt][0pt]{\llap{$\scriptstyle#2$\hskip.8em}}\hss\crcr%
      \hss\rlap{\hskip.8em$\scriptstyle#3$}\hss%
    }%
  }%
}
\newcommand\diagRiiiRightLRM{\reflectbox{\diagRiiiLeftRLM}}
\newcommand\diagRiiiRightLRMWith[3]{%
  {%
    \ooalign{%
      \diagRiiiRightLRM\crcr%
      \hss\raisebox{1.3ex}[0pt][0pt]{\rlap{\hskip.8em$\scriptstyle#1$}}\hss\crcr%
      \hss\raisebox{-1.3ex}[0pt][0pt]{\rlap{\hskip.8em$\scriptstyle#2$}}\hss\crcr%
      \hss\llap{$\scriptstyle#3$\hskip.8em}\hss%
    }%
  }%
}
\newcommand\diagRiiiLeftLMRWith[3]{%
  {%
    \ooalign{%
      \diagRiiiLeftLMR\crcr%
      \hss\raisebox{1.3ex}[0pt][0pt]{\llap{$\scriptstyle#1$\hskip.8em}}\hss\crcr%
      \hss\raisebox{-1.3ex}[0pt][0pt]{\llap{$\scriptstyle#2$\hskip.8em}}\hss\crcr%
      \hss\rlap{\hskip.8em$\scriptstyle#3$}\hss%
    }%
  }%
}
\newcommand\diagRiiiRightRML{\reflectbox{\diagRiiiLeftLMR}}
\newcommand\diagRiiiRightRMLWith[3]{%
  {%
    \ooalign{%
      \diagRiiiRightRML\crcr%
      \hss\raisebox{1.3ex}[0pt][0pt]{\rlap{\hskip.8em$\scriptstyle#1$}}\hss\crcr%
      \hss\raisebox{-1.3ex}[0pt][0pt]{\rlap{\hskip.8em$\scriptstyle#2$}}\hss\crcr%
      \hss\llap{$\scriptstyle#3$\hskip.8em}\hss%
    }%
  }%
}
\newcommand\diagDeltaBrDLeftWith[3]{%
  {%
    \ooalign{%
      \diagDeltaBrDLeft\crcr%
      \hss\raisebox{1.3ex}[0pt][0pt]{\llap{$\scriptstyle#1$\hskip.8em}}\hss\crcr%
      \hss\raisebox{-1.3ex}[0pt][0pt]{\llap{$\scriptstyle#2$\hskip.8em}}\hss\crcr%
      \hss\rlap{\hskip.8em$\scriptstyle#3$}\hss%
    }%
  }%
}
\newcommand\diagDeltaBrDRight{\reflectbox{\diagDeltaBrDLeft}}
\newcommand\diagDeltaBrDRightWith[3]{%
  {%
    \ooalign{%
      \diagDeltaBrDRight\crcr%
      \hss\raisebox{1.3ex}[0pt][0pt]{\rlap{\hskip.8em$\scriptstyle#1$}}\hss\crcr%
      \hss\raisebox{-1.3ex}[0pt][0pt]{\rlap{\hskip.8em$\scriptstyle#2$}}\hss\crcr%
      \hss\llap{$\scriptstyle#3$\hskip.8em}\hss%
    }%
  }%
}
\newcommand\diagRiiiRightUParVVV{\reflectbox{\diagRiiiLeftUParVVV}}
\newcommand\diagFourTRPD{\reflectbox{\diagFourTLND}}
\newcommand\diagFourTRDP{\reflectbox{\diagFourTLDN}}
\newcommand\diagCrossSingRivOLWith[2]{%
  {\ooalign{\diagCrossSingRivOL\crcr\hss\raisebox{8pt}{\llap{$\scriptstyle#1$\hskip.75em}}\hss\crcr\hss\raisebox{-7pt}{\llap{$\scriptstyle#2$\hskip.75em}}\hss}}
}
\newcommand\diagCrossSingRivOR{\reflectbox{\diagCrossSingRivOL}}
\newcommand\diagCrossSingRivORWith[2]{%
  {\ooalign{\diagCrossSingRivOR\crcr\hss\raisebox{8pt}{\rlap{\hskip.75em$\scriptstyle#1$}}\hss\crcr\hss\raisebox{-7pt}{\rlap{\hskip.75em$\scriptstyle#2$}}\hss}}
}
\newcommand\diagCrossSingRivULWith[2]{%
  {\ooalign{\diagCrossSingRivUL\crcr\hss\raisebox{8pt}{\llap{$\scriptstyle#1$\hskip.75em}}\hss\crcr\hss\raisebox{-7pt}{\llap{$\scriptstyle#2$\hskip.75em}}\hss}}
}
\newcommand\diagCrossSingRivUR{\reflectbox{\diagCrossSingRivUL}}
\newcommand\diagCrossSingRivURWith[2]{%
  {\ooalign{\diagCrossSingRivUR\crcr\hss\raisebox{8pt}{\rlap{\hskip.75em$\scriptstyle#1$}}\hss\crcr\hss\raisebox{-7pt}{\rlap{\hskip.75em$\scriptstyle#2$}}\hss}}
}
\newcommand\diagCrossSingRivLNNWith[2]{%
  {\ooalign{\diagCrossSingRivLNN\crcr\hss\raisebox{8pt}{\llap{$\scriptstyle#1$\hskip.75em}}\hss\crcr\hss\raisebox{-7pt}{\llap{$\scriptstyle#2$\hskip.75em}}\hss}}
}
\newcommand\diagCrossSingRivRPP{\reflectbox{\diagCrossSingRivLNN}}
\newcommand\diagCrossSingRivRPPWith[2]{%
  {\ooalign{\diagCrossSingRivRPP\crcr\hss\raisebox{8pt}{\rlap{\hskip.75em$\scriptstyle#1$}}\hss\crcr\hss\raisebox{-7pt}{\rlap{\hskip.75em$\scriptstyle#2$}}\hss}}
}
\newcommand\diagCrossVRivOR{\reflectbox{\diagCrossVRivOL}}
\newcommand\diagCrossVRivUR{\reflectbox{\diagCrossVRivUL}}
\newcommand\diagCrossHRivOR{\reflectbox{\diagCrossHRivOL}}
\newcommand\diagCrossHRivUR{\reflectbox{\diagCrossHRivUL}}
\DeclareFontFamily{OT1}{lzc}{}
\DeclareFontShape{OT1}{lzc}{m}{it}{<->[1.15]pzcmi}{}
\DeclareMathAlphabet{\mathlzc}{OT1}{lzc}{m}{it}
\DeclareFontFamily{U}{mathx}{\hyphenchar\font45}
\DeclareFontShape{U}{mathx}{m}{n}{
      <5> <6> <7> <8> <9> <10>
      <10.95> <12> <14.4> <17.28> <20.74> <24.88>
      mathx10
      }{}
\DeclareSymbolFont{mathx}{U}{mathx}{m}{n}
\newcommand\RomNum[1]{%
  \expandafter\uppercase\expandafter{\romannumeral#1}}
\newcommand\RMove[1]{\mathrm R_{\mathrm{\RomNum{#1}}}}
\DeclareMathAccent{\widecheck}{0}{mathx}{"71}
\newcommand{\Cone}{\operatorname{Cone}}
\newcommand\mnphantom@impl[2]{%
  \sbox0{\ensuremath{#1#2}}\kern-.6\wd0}
\newcommand\mnphantom[1]{\mathpalette\mnphantom@impl{#1}}
\DeclarePairedDelimiterX\KhOf[1]{\lbrack}{\rbrack}{%
  \expandafter\ifx\delimsize\middle%
  \expandafter\@firstoftwo%
  \else%
  \expandafter\@secondoftwo%
  \fi{\mnphantom{\left\lbrack\vphantom{#1}\right.}}{\mnphantom{\delimsize\lbrack}}%
  \delimsize\lbrack\mathopen{}#1\mathclose{}\delimsize\rbrack%
  \expandafter\ifx\delimsize\middle%
  \expandafter\@firstoftwo%
  \else%
  \expandafter\@secondoftwo%
  \fi{\mnphantom{\left.\vphantom{#1}\right\rbrack}}{\mnphantom{\delimsize\rbrack}}
}
\newcommand\Cob{\mathlzc{Cob_2^{\ell}}}
\theoremstyle{plain}
\newtheorem{mainthm}{Main Theorem}
\newtheorem{theorem}{Theorem}[section]
\newtheorem{proposition}[theorem]{Proposition}
\newtheorem{lemma}[theorem]{Lemma}
\theoremstyle{definition}
\theoremstyle{remark}
\numberwithin{equation}{section}
\numberwithin{figure}{section}
\title{On the four-term relation on Khovanov homology}
\date{February 28, 2023}
\author{Noboru Ito and Jun Yoshida}
\renewcommand\abstract[1]{}
\abstract{%
The goal of this paper is to prove a categorified analogue of Kontsevich's 4T relation on Vassiliev derivatives of Khovanov homology. 
This categorification is described as commutativity of a hexagon-shaped diagram of singular links.
Indeed, unwinding the proof of the 4T relation on classical Vassiliev derivatives, one sees that it follows from a relationship between Reidemeister moves and crossing changes.
It turns out that, in the case of Khovanov homology, this relationship becomes commutativity of a diagram consisting of morphisms corresponding to these moves.
The statements and the proofs are given
in terms of cobordisms following Bar-Natan's formulation.
}
\begin{document}
\maketitle

\tableofcontents

\section{Introduction}
\label{sec:intro}
The goal of this paper is to categorify Kontsevich's $4T$ relation on Vassiliev derivatives of Khovanov homology.  

Every knot invariant is extended to singular knots by the Vassiliev skein relation~(\cite{Birman1993, BirmanLin1993, BarNatan1995}):
\begin{equation}
\label{eq:intro:Vassiliev-skein}
v^{(r+1)}\left(\diagSingUp\right)
= v^{(r)}\left(\diagCrossPosUp\right)
- v^{(r)}\left(\diagCrossNegUp\right)
\quad.
\end{equation}
Shirokova and Webster \cite{ShirokovaWebster2007} studied a categorified Vassiliev skein relation on Khovanov-Rozansky homology by computing $\mathrm{Ext}$ in a derived category.
In the case of Khovanov homology, the authors \cite{ItoYoshida2020,ItoYoshida2021} found an explicit description as in the following theorem.

\begin{theorem}[{\cite[Main~Theorems~A, B]{ItoYoshida2021}}]
\label{theo:genus1}
There is a degree-preserving morphism
\begin{equation}
\label{eq:Phi-hat}
\widehat\Phi:\KhOf*{\diagCrossNegUp}\to\KhOf*{\diagCrossPosUp}
\quad,
\end{equation}
of complexes in the category of cobordisms $\Cob$ in \cite{BarNatan2005} which is compatible with Reidemeister moves in the following sense:
\begin{enumerate}[label=\upshape(\arabic*)]
  \item \label{genusone-r12} the diagrams below commute:
\begin{equation*}
\begin{tikzcd}[column sep=0em, row sep=2ex]
& \KhOf*{\diagFiNil} \ar[dr,"{\RMove1}"] \ar[dl,"{\RMove1}"'] &\\
\KhOf*{\diagFiNeg} \ar[rr,"\widehat\Phi"] && \KhOf*{\diagFiPos}
\end{tikzcd}
\ ,\quad
\begin{tikzcd}[column sep=-.2em,row sep=3ex]
& \KhOf*{\diagRiiRightUpWith{a}{b}} \ar[dr,"{\RMove2}"] && \KhOf*{\diagRiiRightUpWith{a}{b}} \ar[dr,"{\widehat\Phi_b}"] & \\
\KhOf*{\diagRvFTwNegWith{a}{b}} \ar[ur,"{\widehat\Phi_a}"] \ar[dr,"{\widehat\Phi_b}"'] && \KhOf*{\diagRiiParUp} \ar[ur,"{\RMove2}"] \ar[dr,"{\RMove2}"'] && \KhOf*{\diagRvFTwPosWith{a}{b}} \\
& \KhOf*{\diagRiiLeftUpWith{a}{b}} \ar[ur,"{\RMove2}"'] && \KhOf*{\diagRiiLeftUpWith{a}{b}} \ar[ur,"{\widehat\Phi_a}"']
\end{tikzcd}
\quad;
\end{equation*}
\item there are degree $-1$ morphisms $\psi^{}_{\mathsf O}$ and $\psi^{}_{\mathsf U}$ which make the following diagrams chain homotopy commutative:
\begin{equation*}
\begin{tikzcd}
\KhOf*{\diagRiiiLeftMLRWith{a}{b}{c}} \ar[r,"{\RMove3^{\mathsf O-}}"] \ar[d,"{\widehat\Phi_c}"'] \ar[dr,dotted,"{\psi^{}_{\mathsf O}}" description] & \KhOf*{\diagRiiiRightMLRWith{a}{b}{c}} \ar[d,"{\widehat\Phi_c}"] \\
\KhOf*{\diagRiiiLeftMRLWith{a}{b}{c}} \ar[r,"{\RMove3^{\mathsf{O}+}}"] & \KhOf*{\diagRiiiRightMRLWith{a}{b}{c}}
\end{tikzcd}
\ ,\quad
\begin{tikzcd}
\KhOf*{\diagRiiiLeftLRMWith{a}{b}{c}} \ar[r,"{\RMove3^{\mathsf U-}}"] \ar[d,"{\widehat\Phi_c}"'] \ar[dr,dotted,"{\psi^{}_{\mathsf U}}" description] & \KhOf*{\diagRiiiRightLRMWith{a}{b}{c}} \ar[d,"{\widehat\Phi_c}"] \\
\KhOf*{\diagRiiiLeftRLMWith{a}{b}{c}} \ar[r,"{\RMove3^{\mathsf{U}+}}"] & \KhOf*{\diagRiiiRightRLMWith{a}{b}{c}}
\end{tikzcd}
\quad,
\end{equation*}
where the morphisms $\RMove3^{\ast\pm}$ are constructed using the ``Kauffman trick'' with respect to the crossings $c$.
\end{enumerate}
\end{theorem}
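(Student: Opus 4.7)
The plan is to construct $\widehat\Phi$ directly as a chain map in Bar-Natan's formal cobordism category $\Cob$. Near the crossing, both $\KhOf*{\diagCrossPosUp}$ and $\KhOf*{\diagCrossNegUp}$ are supported on the same pair of smoothings—the horizontal and vertical resolutions—only with different homological degree shifts and differentials, so I would define $\widehat\Phi$ component-wise by specifying cobordisms in each slot where both complexes are nonzero. Degree preservation together with the chain map condition determines these cobordisms essentially uniquely, as the identity cobordism in the diagonal overlap and a single genus-adjusting saddle in the off-diagonal slot, and the chain map check reduces to a handful of local identities among compositions of saddles, verified via Bar-Natan's (S), (T), and (4Tu) relations.

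For part (1), compatibility with $\RMove1$ reduces to a one-circle computation: after resolving the loop created by $\RMove1$, both compositions through $\widehat\Phi$ yield cobordisms on the same underlying surface, and they agree after one application of the sphere relation. Compatibility with $\RMove2$ (the hexagonal diagram) is similar but runs on the four-vertex resolution cube of the two-crossing tangle; each face of the hexagon reduces, after neck-cutting, to a short composition of saddles and identities, and one verifies face by face that the two paths leaving $\KhOf*{\diagRvFTwNegWith{a}{b}}$ coincide.

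For part (2), strict commutativity fails and one must construct explicit chain homotopies $\psi_{\mathsf O}$ and $\psi_{\mathsf U}$. Because $\RMove3^{\ast\pm}$ is built by the Kauffman trick—realized as the morphism induced on mapping cones by an $\RMove2$-type morphism in the resolution cube of the crossing $c$—the square in question sits between two mapping-cone complexes over the eight-vertex cube of the three-crossing tangle. The plan is to compute the discrepancy $\RMove3^{\mathsf O+}\circ\widehat\Phi_c - \widehat\Phi_c\circ\RMove3^{\mathsf O-}$ vertex by vertex and to exhibit a cobordism $\psi_{\mathsf O}$ whose boundary under the Khovanov differential exactly cancels it. A symmetric argument, exchanging the roles of over- and under-strand at $c$, yields $\psi_{\mathsf U}$.

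The hard part will be part (2): the $\RMove3$ check requires tracking cobordisms on all eight vertices of the three-crossing resolution cube, matching sign conventions throughout, and ensuring that the candidate homotopy $\psi_{\mathsf O}$ (respectively $\psi_{\mathsf U}$) genuinely satisfies the homotopy equation on the nose rather than up to further correction cobordisms. The Bar-Natan relations will have to be applied in a careful order to reduce compound cobordisms to canonical form, and I expect the bulk of the work to consist in this bookkeeping rather than in any deeper geometric input.
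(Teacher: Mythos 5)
There is a genuine gap at the very first step: your identification of $\widehat\Phi$ is wrong, and the uniqueness claim you lean on is false. Since $\KhOf*{\diagCrossNegUp}$ is supported in homological degrees $-1,0$ (with $\diagSmoothH$ in degree $-1$ and $\diagSmoothV$ in degree $0$) while $\KhOf*{\diagCrossPosUp}$ is supported in degrees $0,1$, a degree-preserving morphism has exactly one possibly non-zero component, a map $\Phi\colon\diagSmoothV\to\diagSmoothV$ in degree $0$; there is no ``off-diagonal slot'' to fill with a saddle. The chain-map condition on this single component reads $\Phi\circ\delta_-=0$ and $\delta_+\circ\Phi=0$, which the identity cobordism does \emph{not} satisfy ($\delta_\pm$ are non-zero in $\Cob$), and which the zero map does satisfy, so degree preservation plus the chain-map condition cannot pin the morphism down. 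The actual morphism, recalled in \eqref{eq:Phi-def}, is the \emph{difference of two genus-one cobordisms} obtained by attaching a tube to the identity cylinder in the two possible ways; only for this specific $\Phi$ do the two relations above hold (via neck-cutting and the $4\mathit{Tu}$-relation), and only for it do the $\RMove1$, $\RMove2$, and $\RMove3$ compatibilities of parts (1) and (2) go through. This choice is the essential content of the theorem --- it is why the source calls it the genus-one morphism --- and it cannot be recovered from formal constraints as you propose.

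Your outline of part (2) --- Kauffman trick, mapping cones over the resolution cube of $c$, explicit correcting homotopies --- does match the shape of the argument reviewed in Section~\ref{sec:review} (cf.\ \eqref{eq:R3O-matrix} and \eqref{eq:def-R4:psi}), but as written it is a plan rather than a proof: the substance lies in exhibiting the cobordisms $G_{\mathsf O}$, $G_{\mathsf U}$, $\Psi_{\mathsf O\pm}$, $\Psi_{\mathsf U\pm}$ and verifying the identities \eqref{eq:G-eqn} and \eqref{eq:PsiO-eqn}, none of which can even be formulated until $\Phi$ has been correctly identified. Repair the definition of $\Phi$ first; the downstream verifications then proceed essentially as you describe.
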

Taking the mapping cone of the morphism $\widehat\Phi$, we extend Khovanov homology to singular links by
\begin{equation}\label{eq:cat-Vas}
\KhOf*{\diagSingUp}
\cong
\Cone\left(\KhOf*{\diagCrossNegUp}\xrightarrow{\widehat\Phi}\KhOf*{\diagCrossPosUp}\right)
\quad, 
\end{equation}
which is none other than a categorified version of Vassiliev skein relation~\eqref{eq:intro:Vassiliev-skein}.  
Indeed, the compatibility in \cref{theo:genus1} implies the invariance under the following moves: 
\begin{equation}
\label{eq:intro:moveSing}
\diagCrossSingRivOL \xleftrightarrow{\RMove4} \diagCrossSingRivOR
\ ,\quad
\diagCrossSingRivUL \xleftrightarrow{\RMove4'} \diagCrossSingRivUR
\ ,\quad
\diagRvSingU \xleftrightarrow{\RMove5} \diagRvSingD
\quad.
\end{equation}

In the case of polynomial knot invariants, say $v$, its Vassiliev derivatives satisfy the relations
\begin{gather}
v^{(r)}\left(\diagFiSing\right) = 0
\quad, \label{eq:FI-rel}\\
v^{(r)}\Bigl(\scalebox{.9}{\diagFourTRPD}\Bigr)-v^{(r)}\Bigl(\scalebox{.9}{\diagFourTLDN}\Bigr)
=  v^{(r)}\Bigl(\scalebox{.9}{\diagFourTRDP}\Bigr) - v^{(r)}\Bigl(\scalebox{.9}{\diagFourTLND}\Bigr)
\quad, \label{eq:4T-rel}
\end{gather}
which are respectively called the $FI$ and the $4T$ relations.
We next aim at finding categorified analogues of them.
We note that, besides the obvious proof of these relations, they are also understood in view of the homotopy theory of the space $\mathcal M$ of immersions $S^1\to\mathbb R^3$.
Vassiliev~\cite{Vassiliev1990} introduced a stratification $\mathcal M=\bigcup_i\mathcal M_i$ such that
\begin{itemize}
  \item $\mathcal M_0$ consists of smooth embeddings;
  \item $\mathcal M_1$ consists of smooth immersions with exactly one double point;
  \item $\mathcal M_2$ consists of
\begin{enumerate}[label=\upshape(\alph*)]
  \item\label{M2:crit} smooth injections with exactly one critical point and
  \item smooth immersion with exactly two double points;
\end{enumerate}
\item $\mathcal M_3$ contains immersions with triple points.
\end{itemize}
In this point of view, a crossing change is thought of as a ``wall-crossing'' passing through the codimension $1$ staratum $\mathcal M_1$.
Hence, the $FI$ relation~\eqref{eq:FI-rel} corresponds to the monodromy around the stratum $\mathcal M_2$ as in \cref{fig:FI-Vassiliev}.
\begin{figure}[tbp]
\centering
\begin{tikzpicture}
\node[below] (N) at (0,-1) {\diagFiNeg};
\node[above] (P) at (0,1) {\diagFiPos};
\node[right] (Cr) at (1,0) {\diagFiCrit};
\node[right] (Non) at (3,0) {\diagFiNil};
\draw[red] (-2,0) node[left]{\diagFiSing} -- node[below left] {$\mathcal M_1$} (1,0);
\fill[blue] (1,0) circle (.15) node[below]{$\mathcal M_2$};
\draw[thick,dotted] (N) to[bend right] (Non);
\draw[thick,dotted,-stealth] (Non) to[bend right] (P);
\draw[thick,dotted,-stealth] (N) to[bend left] (P);
\end{tikzpicture}
\caption{The $FI$~relation in Vassiliev theory}
\label{fig:FI-Vassiliev}
\end{figure}
Similarly, the $4T$ relation~\eqref{eq:4T-rel} corresponds to the obstruction dual to the hexagon prism surrounding the triple point singularity in $\mathcal M_3$ as in \cref{fig:hexprism}.
\begin{figure}
\centering
\begin{tikzpicture}[xscale=3,yscale=2.5]
\fill[cyan,fill opacity=0.2] (.41,2.04) -- (-.41,.36) -- (-.41,-2.04) -- (.41,-.36) -- cycle;
\fill[magenta,fill opacity=0.2] (-1.31,1.64) -- (1.31,.76) -- (1.31,-1.64) -- (-1.31,-.76) -- cycle;
\fill[orange,fill opacity=0.2] (1.86,.84) -- (-1.14,.84) -- (-1.86,-.84) -- (1.14,-.84) -- cycle;
\draw[red,thick] (0,1.2) -- (0,1);
\draw[red,thick,dotted] (0,1) -- (0,-1.2);
\draw[red,thick] (-1.31,.44) -- (-1,.33) (.71,-.24) -- (1.31,-0.44);
\draw[red,thick,dotted] (-1,.33) -- (0,0) -- (.71,-.24);
\draw[red,thick,dotted] (.41,.84) -- (0,0) (0,0) -- (-.3,-.61);
\draw[red,thick] (-.3,-.61) -- (-.41,-.84);
\fill[violet] (0,0) circle(.05);
\node[circle,inner sep=1,fill=blue] (rLRM) at (1,1) {};
\node[circle,inner sep=1,fill=blue] (lLRM) at (.76,1.61) {};
\node[circle,inner sep=1,fill=blue] (lLMR) at (-.24,1.61) {};
\node[circle,inner sep=1,fill=blue] (lMLR) at (-1,1) {};
\node[circle,inner sep=1,fill=blue] (rMLR) at (-.76,.39) {};
\node[circle,inner sep=1,fill=blue] (rDelta) at (.24,.39) {};
\node[circle,inner sep=1,fill=magenta] (lPau) at (-.71,1.24) {};
\node[circle,inner sep=1,fill=cyan] (lPbu) at (.3,1.61) {};
\node[circle,inner sep=1,fill=magenta] (rPbu) at (.71,.76) {};
\node[circle,inner sep=1,fill=cyan] (rPau) at (-.3,.39) {};
\node[circle,inner sep=1,fill=blue] (rRLM) at (1,-1) {};
\node[circle,inner sep=1,fill=blue] (lRLM) at (.76,-.39) {};
\node[circle,inner sep=1,fill=blue] (lDelta) at (-.24,-.39) {};
\node[circle,inner sep=1,fill=blue] (lMRL) at (-1,-1) {};
\node[circle,inner sep=1,fill=blue] (rMRL) at (-.76,-1.61) {};
\node[circle,inner sep=1,fill=blue] (rRML) at (.24,-1.61) {};
\node[circle,inner sep=1,fill=magenta] (lPad) at (-.71,-.76) {};
\node[circle,inner sep=1,fill=cyan] (lPbd) at (.3,-.39) {};
\node[circle,inner sep=1,fill=magenta] (rPbd) at (.71,-1.24) {};
\node[circle,inner sep=1,fill=cyan] (rPad) at (-.3,-1.61) {};
\draw (lPau) -- (lMLR) -- (rMLR) -- (-.4,.39) (rPau) -- (rDelta) -- (rPbu);
\draw[densely dotted] (-.4,.39) -- (rPau) (.2,1.61) -- (lPbu);
\draw[densely dotted] (lPau) -- (-.52,1.39) (rPbu) -- (.88,.9);
\draw (-.52,1.39) -- (lLMR) (.88,.9) -- (rLRM) -- (lLRM) -- (lPbu) (.2,1.61) -- (lLMR);
\draw[densely dotted] (lMRL) -- (lPad) -- (lDelta) -- (lPbd) -- (lRLM) -- (rRLM) -- (rPbd);
\draw (lMRL) -- (rMRL) -- (-.41,-1.61) (rPad) -- (rRML) -- (rPbd);
\draw[densely dotted] (-.41,-1.61) -- (rPad);;
\path (rLRM) -- node[midway,circle,inner sep=1,fill=orange](rXXM){} (rRLM);
\path (lLRM) -- node[midway,circle,inner sep=1,fill=orange](lXXM){} (lRLM);
\path (lLMR) -- node[midway,circle,inner sep=1,fill=orange](lXMX){} (lDelta);
\path (lMLR) -- node[midway,circle,inner sep=1,fill=orange](lMXX){} (lMRL);
\path (rMLR) -- node[midway,circle,inner sep=1,fill=orange](rMXX){} (rMRL);
\path (rDelta) -- node[midway,circle,inner sep=1,fill=orange](rXMX){} (rRML);
\draw (rLRM) -- (1,.86);
\draw[densely dotted] (1,.86) -- (rXXM) -- (1,-.84);
\draw[densely dotted] (1,-.84) -- (rRLM);
\draw[densely dotted] (lLRM) -- (lXXM) -- (lRLM);
\draw[densely dotted] (lLMR) -- (lXMX) -- (lDelta);
\draw (lMLR) -- (lMXX) (-1,-.84) -- (lMRL);
\draw[densely dotted] (lMXX) -- (-1,-.84);
\draw (rMLR) -- (rMXX) (-.76,-.84) -- (rMRL);
\draw[densely dotted] (rMXX) -- (-.76,-.84);
\draw (rDelta) -- (rXMX) (.24,-.84) -- (rRML);
\draw[densely dotted] (rXMX) -- (.24,-.84);
\node[above] at (lLMR) {\scalebox{.8}{\diagRiiiLeftLMR}};
\node[above] at (lLRM) {\scalebox{.8}{\diagRiiiLeftLRM}};
\node[above right] at (rLRM) {\scalebox{.8}{\diagRiiiRightLRM}};
\node[left=3] at (lMLR) {\scalebox{.8}{\diagRiiiLeftMLR}};
\node[below] at (rRML) {\scalebox{.8}{\diagRiiiRightRML}};
\node[below] at (rMRL) {\scalebox{.8}{\diagRiiiRightMRL}};
\node[below left] at (lMRL) {\scalebox{.8}{\diagRiiiLeftMRL}};
\node[right=3] at (rRLM) {\scalebox{.8}{\diagRiiiRightRLM}};
\end{tikzpicture}
\caption{Hexagon prism diagram}
\label{fig:hexprism}
\end{figure}
In fact, the $4T$ relation \eqref{eq:4T-rel} is proved by taking the alternating sum of the twelve diagrams in \cref{fig:hexprism}.
Kontsevich~\cite{Kontsevich1993} showed that the $FI$ and the $4T$ relation are essential for \emph{finite type invariants}: for the space $V_m$ of the order $m$ Vassiliev invariants, he gave an explicit description of the quotient $V_m/V_{m-1}$ in terms of these relations.

Our goal is to categorify this framework.
\Cref{theo:genus1}-\ref{genusone-r12} implies 
\[
\KhOf*{\diagFiSing} \simeq 0
\quad, 
\]
which is none other than \emph{categorified $FI$ relation}.
The next target is the $4T$ relation.  

\begin{mainthm}\label{mainTHM}
With regard to the morphism $\widehat\Phi$ and the chain homotopies $\psi^{}_{\mathsf O},\psi^{}_{\mathsf U}$ in \cref{theo:genus1}, the following hold.
\begin{enumerate}[label=\upshape(\arabic*)]
\item\label{sub:mainTHM:hexagon} The following diagrams commute strictly:
\begin{equation}
\label{eq:hexagon-cross}
\begin{tikzcd}[column sep={3.1em,between origins}, row sep={5pc,between origins}]
& \KhOf*{\diagRiiiLeftLMRWith{a}{b}{c}} \ar[rr,"\widehat\Phi_b"] \ar[dl,"\widehat\Phi_a"'] && \KhOf*{\diagRiiiLeftLRMWith{a}{b}{c}} \ar[dr,"{\RMove3^{\mathsf U-}}"] & \\
\KhOf*{\diagRiiiLeftMLRWith{a}{b}{c}} \ar[dr,"{\RMove3^{\mathsf O-}}"'] &&&& \KhOf*{\diagRiiiRightLRMWith{a}{b}{c}} \ar[dl,"\widehat\Phi_b"] \\
& \KhOf*{\diagRiiiRightMLRWith{a}{b}{c}} \ar[rr,"\widehat\Phi_a"] && \KhOf*{\diagDeltaBrDRightWith{a}{b}{c}} &
\end{tikzcd}
\,,\;
\begin{tikzcd}[column sep={3.1em,between origins}, row sep={5pc,between origins}]
& \KhOf*{\diagDeltaBrDLeftWith{a}{b}{c}} \ar[rr,"\widehat\Phi_b"] \ar[dl,"\widehat\Phi_a"'] && \KhOf*{\diagRiiiLeftRLMWith{a}{b}{c}} \ar[dr,"{\RMove3^{\mathsf U+}}"] & \\
\KhOf*{\diagRiiiLeftMRLWith{a}{b}{c}} \ar[dr,"{\RMove3^{\mathsf O+}}"'] &&&& \KhOf*{\diagRiiiRightRLMWith{a}{b}{c}} \ar[dl,"\widehat\Phi_b"] \\
& \KhOf*{\diagRiiiRightMRLWith{a}{b}{c}} \ar[rr,"\widehat\Phi_a"] && \KhOf*{\diagRiiiRightRMLWith{a}{b}{c}} &
\end{tikzcd}
\end{equation}
\item\label{sub:mainTHM:hexprism} In the module of morphisms of degree $-1$, the following equation holds:
\begin{equation}\label{eq:hexagon-prism}
\widehat\Phi_a\psi^{}_{\mathsf O}\widehat\Phi_a -  \widehat\Phi_b\psi^{}_{\mathsf U}\widehat\Phi_b = 0
\quad.
\end{equation}
\end{enumerate}
\end{mainthm}

The left hand side of the equation~\eqref{eq:hexagon-prism} is a $-1$-cocycle in a $\mathrm{Hom}$ complex associated with the boundary of the hexagon prism in \cref{fig:hexprism}, or the \emph{$2$-monodromy} around the triple point.
Thus, the part~\ref{sub:mainTHM:hexprism} asserts that it is actually a trivial cycle.
As a consequence, we obtain the following categorified analogue of $4T$-relation:

\begin{mainthm}\label{cat-4T}
There are morphisms
\[
\KhOf*{\diagFourTLND}\to\KhOf*{\diagFourTRDP}
\ ,\quad
\KhOf*{\diagFourTLDN}\to\KhOf*{\diagFourTRPD}
\]
whose mapping cones are chain homotopic to each other.
\end{mainthm}

Indeed, by taking the mapping cones along the vertical edges of the hexagon prism diagram~\eqref{fig:hexprism}, we obtain the following (strictly) commutative diagram:
\begin{equation}
\label{eq:hexagon}
\begin{tikzcd}[column sep={3.5em,between origins}, row sep={5pc,between origins}]
& \KhOf*{\diagCrossSingRivLNNWith{a}{b}} \ar[rr,"\widehat\Phi_b"] \ar[dl,"\widehat\Phi_a"'] && \KhOf*{\diagCrossSingRivULWith{a}{b}} \ar[dr,"{\RMove4^{\mathsf U}}"] & \\
\KhOf*{\diagCrossSingRivOLWith{a}{b}} \ar[dr,"{\RMove4^{\mathsf O}}"'] &&&& \KhOf*{\diagCrossSingRivURWith{a}{b}} \ar[dl,"\widehat\Phi_b"] \\
& \KhOf*{\diagCrossSingRivORWith{a}{b}} \ar[rr,"\widehat\Phi_a"] && \KhOf*{\diagCrossSingRivRPPWith{a}{b}} &
\end{tikzcd}
\quad,
\end{equation}
where $\RMove4^{\mathsf O},\RMove4^{\mathsf U}$ are the chain homotopy equivalences associated with the moves~\eqref{eq:intro:moveSing}.
The two morphisms in \cref{cat-4T} are induced by taking mapping cones of \eqref{eq:hexagon} in two different directions.

We note that the equation~\eqref{eq:hexagon-prism} suggests studying an obstruction associated with a triple point in the stratum $\mathcal M_3$ for general knot homologies.
Indeed, suppose $H(K)$ is a knot homology equipped with a crossing-change morphism $\widehat\Phi$ which is compatible with Reidemeister moves in the sense of Theorem~\ref{theo:genus1} (possibly up to homotopy).
In this case, one can prove that the two hexagon diagrams~\eqref{eq:hexagon-cross} are commutative up to chain homotopies, say $\alpha_+$ and $\alpha_-$ with
\[
\partial(\alpha_-)=\widehat\Phi_a\RMove3^{\mathsf O-}\widehat\Phi_a - \widehat\Phi_b\RMove3^{\mathsf U-}\widehat\Phi_b
\ ,\quad
\partial(\alpha_+)=\widehat\Phi_a\RMove3^{\mathsf O+}\widehat\Phi_a - \widehat\Phi_b\RMove3^{\mathsf U+}\widehat\Phi_b
\quad.
\]
For example, the part~\ref{sub:mainTHM:hexagon} in \cref{mainTHM} asserts that we can take $\alpha_\pm=0$.
They together with the homotopies $\psi_{\mathsf O}$ and $\psi_{\mathsf U}$ in \cref{theo:genus1} yield a $-1$-cocycle
\[
\theta\coloneqq\widehat\Phi_a\psi_{\mathsf O}\widehat\Phi_a - \widehat\Phi_b\psi_{\mathsf U}\widehat\Phi_b + \widehat\Phi_c\alpha_- - \alpha_+\widehat\Phi_c
\]
in the $\mathrm{Hom}$-complex, which corresponds to the $2$-cycle generated by the faces of the hexagon prism~\cref{fig:hexprism}.
The part~\ref{sub:mainTHM:hexprism} in \cref{mainTHM} then implies that we have $\theta=0$ in the case of Khovanov homology.


\section{Review on crossing-change on Khovanov homology}
\label{sec:review}

We begin with a review on Khovanov homology and the genus-one morphism $\widehat\Phi$.

\subsection{Definitions and notations}

We construct Khovanov homology following Bar-Natan's formalism~\cite{BarNatan2005}.
By \emph{cobordisms}, we mean $2$-dimensional cobordisms with corners.
In this paper, we always use the ``left-to-right'' convention to depict them; e.g.~
\begin{equation*}
\begin{tikzpicture}[xlen=.5pt,ylen=-.5pt,scale=.8]
\draw[red,very thick] (-73.59,48.63) -- (-33.59,48.63) to[quadratic={(-13.59,48.63)}] (-6.545,44.57) to[quadratic={(-4.71,43.51)}] (-4.71,42.73);
\draw[red,very thick,dotted] (-4.71,42.73) to[quadratic={(-4.71,40.5)}] (-19.5,40.5) to[quadratic={(-39.5,40.5)}] (-32.46,36.43) to[quadratic={(-25.41,32.37)}] (-5.41,32.37) -- (-3.76,32.37);
\draw[red,very thick] (-3.76,32.37) -- (29.86,32.37);
\draw[red,very thick,dotted] (29.86,32.37) -- (60.5,32.37);
\draw[red,very thick] (60.5,32.37) -- (74.59,32.37);
\draw[red,very thick] (46.41,48.63) to[quadratic={(31.62,48.63)}] (31.62,46.41);
\draw[red,very thick,dotted] (31.62,46.41) to[quadratic={(31.62,45.63)}] (33.46,44.57) to[quadratic={(37.64,42.15)}] (46.41,41.17);
\draw[red,very thick] (46.41,41.17) to[quadratic={(52.39,40.5)}] (60.5,40.5);
\draw[black] (-4.71,42.72) .. controls+(0,-40)and+(0,-40) .. (31.62,46.41);
\draw[black,thick,dotted] (-34.29,38.28) -- (-34.29,-31.37);
\draw[black] (-34.29,-31.37) -- (-34.29,-41.72);
\draw[black] (9.38,-45.41) .. controls(9.38,-40.08)and(10.03,-35.4) .. (11.14,-31.37);
\draw[black,thick,dotted] (11.14,-31.37) .. controls(17.61,-8.11)and(39.9,-7) .. (44.76,-31.37);
\draw[black] (44.76,-31.37) .. controls(45.37,-34.42)and(45.71,-37.87) .. (45.71,-41.72);
\draw[black] (-73.59,48.63) -- (-73.59,-31.37);
\draw[black] (46.41,48.63) -- (46.41,-31.37);
\draw[black] (60.5,40.5) -- (60.5,-39.5);
\draw[black] (74.59,32.37) -- (74.59,-47.63);
\draw[red,very thick] (-73.59,-31.37) -- (46.41,-31.37);
\draw[red,very thick] (-32.46,-43.57) to[quadratic={(-39.5,-39.5)}] (-19.5,-39.5) to[quadratic={(0.5,-39.5)}] (7.545,-43.57) to[quadratic={(14.59,-47.63)}] (-5.41,-47.63) to[quadratic={(-25.41,-47.63)}] (-32.46,-43.57);
\draw[red,very thick] (60.5,-39.5) to[quadratic={(40.5,-39.5)}] (47.54,-43.57) to[quadratic={(54.59,-47.63)}] (74.59,-47.63);
\end{tikzpicture}
\;:\;
\begin{tikzpicture}[xlen=.5pt,ylen=-.5pt,scale=.8]
\draw[red,very thick] (-59.5,40.5) -- (-19.5,40.5) to[quadratic={(0.5,40.5)}] (0.5,20.5) to[quadratic={(0.5,0.5)}] (-19.5,0.5) to[quadratic={(-39.5,0.5)}] (-39.5,-19.5) to[quadratic={(-39.5,-39.5)}] (-19.5,-39.5) -- (60.5,-39.5);
\draw[red,very thick] (60.5,40.5) to[quadratic={(40.5,40.5)}] (40.5,20.5) to[quadratic={(40.5,0.5)}] (60.5,0.5);
\end{tikzpicture}
\;\to\;
\begin{tikzpicture}[xlen=.5pt,ylen=-.5pt,scale=.8]
\draw[red,very thick] (-59.5,40.5) -- (60.5,40.5);
\draw[red,very thick] (-39.5,-19.5) to[quadratic={(-39.5,0.5)}] (-19.5,0.5) to[quadratic={(0.5,0.5)}] (0.5,-19.5) to[quadratic={(0.5,-39.5)}] (-19.5,-39.5) to[quadratic={(-39.5,-39.5)}] (-39.5,-19.5);
\draw[red,very thick] (60.5,0.5) to[quadratic={(40.5,0.5)}] (40.5,-19.5) to[quadratic={(40.5,-39.5)}] (60.5,-39.5);
\end{tikzpicture}
\end{equation*}
As an unfortunate consequence, the composition $W_1 \circ W_0$ of cobordisms $W_0$ and $W_1$ is, to the contrary to its notation, represented by gluing the right boundary of $W_0$ to the left boundary of $W_1$.
Hence, we will not omit the composition operator ``$\circ$'' in composing ``pictures'' to clarify the composition order throughout the paper.

We define $\Cob$ to be the additive closure of the category of formal sums of cobordisms modulo the three relations called $S$-, $T$-, and $4\!\mathit{Tu}$-relations (see~\cite{BarNatan2005}).
Although a morphism of $\Cob$ is hence a matrix of cobordisms, we often denote it by the formal sum of all its entries in the case where the domains and codomains are understood from the pictures.
For example, the morphism
\begin{equation*}
\begin{tikzcd}
\begin{bmatrix}
\;\BordTriId &  2\;\BordRTwoLCounit & -\;\BordCompGFst\; \\[6ex]
\;\BordRTwoBarRUnit & 0 & -3\;\BordCompGSnd\;
\end{bmatrix}
:\:
\diagTriviii \!\oplus \diagRiiiLeftUParWWV \oplus \diagRiiiLeftUParWVV \to \diagTriviii \!\oplus \diagRiiiRightUParWWV
\end{tikzcd}
\quad.  
\end{equation*}
in $\Cob$ is written as the following linear sum:
\begin{equation*}
\BordTriId
+\BordRTwoBarRUnit
+ 2~~\BordRTwoLCounit - \BordCompGFst - 3~~\BordCompGSnd
\quad.
\end{equation*}

For each tangle diagram $D$, we construct $\KhOf{D}$ as a complex in  $\Cob$.  We consider the following single saddle operations $\delta_-$ and $\delta_+$:
\[
\delta_-\coloneqq\BordDeltaN
\;:\;
\diagSmoothH\to \diagSmoothV
\quad,\qquad
\delta_+\coloneqq\BordDeltaP
\;:\;
\diagSmoothV
\to \diagSmoothH
\quad.
\]
The complexes $\KhOf*{\diagCrossNegUp}$ and $\KhOf*{\diagCrossPosUp}$ are defined as follows:
\begin{equation*}
\begin{tikzcd}[column sep=1.75em,row sep=0ex]
\KhOf*{\diagCrossNegUp}\;:\quad\cdots \ar[r] & 0 \ar[r] & \ar[r] \overset{-1}{\diagSmoothH} \ar[r,"\delta_-"] & \overset{0}{\diagSmoothV} \ar[r] & \overset{1}{\vphantom{\diagSmoothV}0} \ar[r] & 0 \ar[r] & \cdots \\
\KhOf*{\diagCrossPosUp}\;:\quad\cdots \ar[r] & 0 \ar[r]& 0 \ar[r] & \diagSmoothV \ar[r,"-\delta_+"] & \diagSmoothH \ar[r] & 0 \ar[r] & \cdots
\end{tikzcd}
\quad.
\end{equation*}
For general tangle diagram $D$, we define $\KhOf{D}$ by ``stacking up'' the complexes for all the crossings in $D$.
Bar-Natan~\cite{BarNatan2005} proved that the resulting complex is invariant under Reidemeister moves and that it induces Khovanov homology for link diagrams.
We call it the \emph{Khovanov complex} of $D$.

In \cite{ItoYoshida2021}, the following degree-preserving crossing-change is discussed:
\begin{equation}\label{eq:Phi}
\widehat{\Phi}:\KhOf*{\diagCrossNegUp}\to \KhOf*{\diagCrossPosUp}
\quad.
\end{equation}
We first consider the morphism $\Phi$ in the category $\Cob$ given by
\begin{equation}\label{eq:Phi-def}
\Phi\coloneqq \BordPhiFst - \BordPhiSnd:\diagSmoothV \to \diagSmoothV
\quad,
\end{equation}
where annuluses are attached to the identity cobordism in two different ways.
The morphism $\widehat\Phi$ is defined by the following diagram:
\begin{equation}\label{eq:Phihat-def}
\begin{tikzcd}[column sep=1.75em]
\KhOf*{\diagCrossNegUp}\mathrlap{\;:} \ar[d,"\widehat\Phi"'] & \mathllap{\cdots} \ar[r] & 0 \ar[r] \ar[d] & \overset{-1}{\diagSmoothH} \ar[r,"\delta_-"] \ar[d] & \overset{0}{\diagSmoothV} \ar[r] \ar[d,"\Phi"] & \overset{1}{\vphantom{\diagSmoothV}0} \ar[r] \ar[d] & 0 \ar[r] \ar[d] & \cdots \\
\KhOf*{\diagCrossPosUp}\mathrlap{\;:} & \mathllap{\cdots} \ar[r] & 0 \ar[r] & 0 \ar[r] & \diagSmoothV \ar[r,"-\delta_+"] & \diagSmoothH \ar[r] & 0 \ar[r] & \cdots
\end{tikzcd}
\quad.
\end{equation}
In view of the Vassiliev skein relation, we extend Khovanov complex to singular tangle diagrams by setting $\KhOf*{\diagSingUp}$ to be the mapping cone of $\widehat\Phi$; more explicitly, it is the complex as follows:
\[
\KhOf*{\diagSingUp}\;:\quad
\cdots\to 0
\to \overset{-2}{\diagSmoothH}
\xrightarrow{-\delta_-} \overset{-1}{\diagSmoothV}
\xrightarrow{\Phi} \overset{0}{\diagSmoothV}
\xrightarrow{-\delta_+} \overset{1}{\diagSmoothH}
\to 0\to{\scriptstyle\cdots}
\quad.
\]

\begin{theorem}[\cite{ItoYoshida2021}]
\label{main:UKH-sing}
The complex $\KhOf{D}$ is invariant under moves of singular tangles up to chain homotopy equivalences.
\end{theorem}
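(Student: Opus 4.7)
The plan is to recognize every singular Reidemeister move as a cone morphism induced by a compatibility square for $\widehat\Phi$ with a classical Reidemeister move, and to handle the remaining moves by Bar-Natan's invariance applied locally away from any singular crossing. For moves $\RMove1,\RMove2,\RMove3$ that take place in a region disjoint from the singular crossings, the Bar-Natan invariance morphism is local and tensors with the identity on the rest of the tangle, so it commutes strictly with $\widehat\Phi$ (acting at a distant singular crossing) and lifts to $\mathrm{Cone}(\widehat\Phi)$ by the block-diagonal formula. Since the local factor is a chain homotopy equivalence, the induced cone map is too.

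For the three singular-specific moves in \eqref{eq:intro:moveSing}, I would use the standard cone principle: a square
\begin{equation*}
\begin{tikzcd}
A \ar[r,"f_0"] \ar[d,"\alpha"'] & B \ar[d,"\beta"] \\
A' \ar[r,"f_1"] & B'
\end{tikzcd}
\end{equation*}
that commutes up to a chain homotopy $h$, meaning $\beta f_0 - f_1\alpha = \partial h$, yields a chain map $\mathrm{Cone}(f_0)\to\mathrm{Cone}(f_1)$ given in each degree by $\left(\begin{smallmatrix}\alpha & 0\\ h & \beta\end{smallmatrix}\right)$, and this is a chain homotopy equivalence as soon as $\alpha$ and $\beta$ are. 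Applying this with $f_0=f_1=\widehat\Phi$ (at the crossing being singularized) and with the horizontal arrows $\alpha,\beta$ being the Bar-Natan Reidemeister equivalences on the two sides of the singular crossing, each singular move reduces to the existence of a suitable homotopy $h$.

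Concretely, for $\RMove5$ I would read off the compatibility square from the second diagram in \cref{theo:genus1}\ref{genusone-r12}, taking for $\alpha$ and $\beta$ the two $\RMove2$ equivalences flanking the arc that threads through the singular crossing; the diagram commutes strictly, so $h=0$ suffices. For the moves $\RMove4$ and $\RMove4'$, the relevant compatibility squares are precisely those in \cref{theo:genus1}, with the horizontal maps given by the Kauffman-trick morphisms $\RMove3^{\mathsf O\pm}$ and $\RMove3^{\mathsf U\pm}$ and with the homotopies $\psi^{}_{\mathsf O}$ and $\psi^{}_{\mathsf U}$ playing the role of $h$. In each case $\alpha,\beta$ are chain homotopy equivalences by Bar-Natan's theorem, so the induced map of cones is an equivalence.

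The main obstacle I foresee is purely organizational rather than computational: one has to unpack each of $\RMove4,\RMove4',\RMove5$ as a movie, identify the crossing that is to be singularized, and match it against the distinguished crossing $c$ (for $\RMove4,\RMove4'$) or one of $a,b$ (for $\RMove5$) appearing in the compatibility diagrams of \cref{theo:genus1}. Once this matching is fixed, no new calculation in $\Cob$ is required: the existence of the compatibility square and of the requisite homotopy is already packaged in \cref{theo:genus1}, and the cone-equivalence construction above promotes it to the desired invariance of $\KhOf{D}$.
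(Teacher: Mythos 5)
Your proposal is correct and follows essentially the same route the paper takes: \cref{main:UKH-sing} is recalled from \cite{ItoYoshida2021}, and both there and in the present paper (see the discussion around \eqref{eq:intro:moveSing} and the construction of $\RMove4^{\mathsf O},\RMove4^{\mathsf U}$ in \eqref{eq:R4-equiv}) the singular moves are handled exactly by promoting the compatibility squares of \cref{theo:genus1} to equivalences of mapping cones via the matrix $\bigl(\begin{smallmatrix}\alpha & 0\\ h & \beta\end{smallmatrix}\bigr)$, with the classical moves away from the singular crossings handled by locality. The only blemish is a notational slip where you call $\alpha,\beta$ the ``horizontal'' arrows after displaying them as vertical ones; the mathematics is unaffected.
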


\subsection{The homotopy equivalence for \texorpdfstring{$\RMove3$}{RIII}}
\label{sec:review:R3}

We next recall how the invariance under $\RMove3$ was proved in \cite{BarNatan2005}, where a technique so-called ``Kauffman trick'' was used.
Indeed, by the definition of Khovanov complex, one has the following identifications:
\begin{equation}
\label{eq:R3O-cones}
\begin{gathered}
\KhOf*{\diagRiiiLeftMLR} \cong \operatorname{Cone}\left(\KhOf*{\diagCrossHRivOL} \xrightarrow{\delta_-} \KhOf*{\diagCrossVRivOL}\right)
\quad,\\
\KhOf*{\diagRiiiRightMLR} \cong \operatorname{Cone}\left(\KhOf*{\diagCrossHRivOR} \xrightarrow{\delta_-} \KhOf*{\diagCrossVRivOR}\right)
\quad,\\
\KhOf*{\diagRiiiLeftMRL} \cong \operatorname{Cone}\left(\KhOf*{\diagCrossVRivOL} \xrightarrow{\delta_+} \KhOf*{\diagCrossHRivOL}\right)[1]
\quad,\\
\KhOf*{\diagRiiiRightMRL} \cong \operatorname{Cone}\left(\KhOf*{\diagCrossVRivOR} \xrightarrow{\delta_+} \KhOf*{\diagCrossHRivOR}\right)[1]
\quad.
\end{gathered}
\end{equation}
Notice that, in each pair of the moves, the domains and the codomains are identical or connected by two Reidemeister moves of type II.
Unwinding the chain homotopy equivalences associated with those moves (see \cite{BarNatan2005}), one sees that we have the commutative squares in the category of complexes in $\Cob$:
\begin{equation}
\label{eq:R3O-conecubes}
\begin{tikzcd}
\KhOf*{\diagCrossHRivOL} \ar[d,"\gamma^{}_{\mathsf O}"'] \ar[r,"\delta_-"] \ar[dr,dotted,"F_{\mathsf O-}" description] & \KhOf*{\diagCrossVRivOL} \ar[d,"-\omega^{}_{\mathsf O}"] \\
\KhOf*{\diagCrossHRivOR} \ar[r,"\delta_-"'] & \KhOf*{\diagCrossVRivOR}
\end{tikzcd}
\ ,\quad
\begin{tikzcd}
\KhOf*{\diagCrossVRivOL} \ar[d,"\omega^{}_{\mathsf O}"'] \ar[r,"\delta_+"] \ar[dr,dotted,"F_{\mathsf O+}" description] & \KhOf*{\diagCrossHRivOL} \ar[d,"\gamma^{}_{\mathsf O}"] \\
\KhOf*{\diagCrossVRivOR} \ar[r,"\delta_+"'] & \KhOf*{\diagCrossHRivOR}
\end{tikzcd}
\quad,
\end{equation}
where $\gamma^{}_{\mathsf O}=\{\gamma_{\mathsf O}^i\}_i$ and $\omega^{}_{\mathsf O}=\{\omega_{\mathsf O}^i\}_i$ are morphisms of complexes which are zero except
\begin{gather}
\label{eq:def-R3O:gamma}
\gamma_{\mathsf O}^{-1}\coloneqq -\BordEquivOO
\ ,\quad
\gamma_{\mathsf O}^0\coloneqq -\BordEquivOl + \BordEquivlO
\ ,\quad
\gamma_{\mathsf O}^1\coloneqq \BordEquivll
\quad,\\[1ex]
\label{eq:def-R3O:omega}
\omega_{\mathsf O}^0\coloneqq
\BordTriId
+\BordRTwoBarRUnit
+\BordRTwoLCounit
+\BordRTwoLCounitRUnit
\quad,
\end{gather}
while $F_{\mathsf O-}=\{F_{\mathsf O-}^i\}_i$ and $F_{\mathsf O+}=\{F_{\mathsf O+}^i\}_i$ are chain homotopies (i.e.~degree $-1$ elements in the $\mathrm{Hom}$ complexes bounding appropriate elements) which are zero except
\begin{equation}
\label{eq:def-R3O:F}
F_{\mathsf O-}^0\coloneqq \BordCompFSnd
\ ,\quad
F_{\mathsf O-}^1\coloneqq \BordCompFFst
\allowdisplaybreaks[3]
\ ,\quad
F_{\mathsf O+}^0\coloneqq \BordCompHSnd
\ ,\quad
F_{\mathsf O+}^1\coloneqq \BordCompHFst
\quad.
\end{equation}
In fact, one can verify that $F_{\mathsf O-}$ and $F_{\mathsf O+}$ satisfy the equations in the $\mathrm{Hom}$-complexes respectively:
\begin{equation}
\label{eq:FO-eqn}
\partial(F_{\mathsf O-}) = \delta_-\gamma^{}_{\mathsf O} - (-\omega^{}_{\mathsf O})\delta_-
\ ,\quad
\partial(F_{\mathsf O+}) = \delta_+\omega^{}_{\mathsf O} - \gamma^{}_{\mathsf O}\delta_+
\quad,
\end{equation}
so that they exhibit \eqref{eq:R3O-conecubes} as homotopy commutative squares.
We then obtain a chain homotopy equivalence
\begin{equation}\label{eq:R3O}
\RMove3^{\mathsf O-}:\KhOf*{\diagRiiiLeftMLR} \xrightarrow\simeq \KhOf*{\diagRiiiRightMLR}
\ ,\quad
\RMove3^{\mathsf O+}:\KhOf*{\diagRiiiLeftMRL} \xrightarrow\simeq \KhOf*{\diagRiiiRightMRL}
\quad.
\end{equation}
More explicitly, under the identifications \eqref{eq:R3O-cones}, the morphisms $\RMove3^{\mathsf O-}$ and $\RMove3^{\mathsf O+}$ in \eqref{eq:R3O} are respectively represented by the matrices below:
\begin{equation}
\label{eq:R3O-matrix}
\RMove3^{\mathsf O-} =
\begin{bmatrix}
-\omega^{}_{\mathsf O} & -F_{\mathsf O-} \\ 0 & \gamma^{}_{\mathsf O}
\end{bmatrix}
\ ,\quad
\RMove3^{\mathsf O+} =
\begin{bmatrix}
\gamma^{}_{\mathsf O} & -F_{\mathsf O_+} \\ 0 & \omega^{}_{\mathsf O}
\end{bmatrix}
\quad.
\end{equation}
Similarly, the chain homotopy equivalences
\begin{equation}\label{eq:R3U}
\RMove3^{\mathsf U-}:\KhOf*{\diagRiiiLeftLRM} \xrightarrow\simeq \KhOf*{\diagRiiiRightLRM}
\ ,\quad
\RMove3^{\mathsf U+}:\KhOf*{\diagRiiiLeftRLM} \xrightarrow\simeq \KhOf*{\diagRiiiRightRLM}
\end{equation}
are obtained by the following homotopy commutative squares:
\begin{equation}
\begin{tikzcd}
\KhOf*{\diagCrossHRivUL} \ar[d,"\gamma_{\mathsf U}"'] \ar[r,"\delta_-"] \ar[dr,dotted,"F_{\mathsf U-}" description] & \KhOf*{\diagCrossVRivUL} \ar[d,"-\omega^{}_{\mathsf U}"] \\
\KhOf*{\diagCrossHRivUR} \ar[r,"\delta_-"'] & \KhOf*{\diagCrossVRivUR}
\end{tikzcd}
\ ,\quad
\begin{tikzcd}
\KhOf*{\diagCrossVRivUL} \ar[d,"\omega^{}_{\mathsf U}"'] \ar[r,"\delta_+"] \ar[dr,dotted,"F_{\mathsf U+}" description] & \KhOf*{\diagCrossHRivUL} \ar[d,"\gamma_{\mathsf U}"] \\
\KhOf*{\diagCrossVRivUR} \ar[r,"\delta_+"'] & \KhOf*{\diagCrossHRivUR}
\end{tikzcd}
\quad,
\end{equation}
where $\gamma_{\mathsf U}$ and $\omega^{}_{\mathsf U}$ are morphisms of chain complexes in $\Cob$ given by
\begin{gather}
\label{eq:def-R3U:gamma}
\gamma_{\mathsf U}^{-1}\coloneqq \BordEquivll
\ ,\quad
\gamma_{\mathsf U}^0\coloneqq -\BordEquivOl + \BordEquivlO
\ ,\quad
\gamma_{\mathsf U}^1\coloneqq -\BordEquivOO
\quad,\\
\label{eq:def-R3U:omega}
\omega_{\mathsf U}^0\coloneqq \BordTriId -\BordRTwoBarRUnit -\BordRTwoLCounit +\BordRTwoLCounitRUnit
\quad,
\end{gather}
while $F_{\mathsf U-}$ and $F_{\mathsf U_+}$ are chain homotopies given by
\begin{gather}
\label{eq:def-R3U:F}
F_{\mathsf U-}^0\coloneqq \BordCompFSndVar
\ ,\quad
F_{\mathsf U-}^1\coloneqq -\BordCompFFstVar
\ ,\quad
F_{\mathsf U+}^0\coloneqq -\BordCompHSndVar
\ , \quad
F_{\mathsf U+}^1\coloneqq \BordCompHFstVar
\quad,
\end{gather}
so that they satisfy the equations
\begin{equation}
\label{eq:FU-eqn}
\partial(F_{\mathsf U-}) = \delta_-\gamma^{}_{\mathsf U} - (-\omega^{}_{\mathsf U})\delta_-
\ ,\quad
\partial(F_{\mathsf U+}) = \delta_+\omega^{}_{\mathsf U} - \gamma^{}_{\mathsf U}\delta_+
\quad.
\end{equation}
The matrix representations of $\RMove3^{\mathsf U-}$ and $\RMove3^{\mathsf U+}$ are then given by
\begin{equation}
\label{eq:R3U-matrix}
\RMove3^{\mathsf U-} =
\begin{bmatrix}
-\omega^{}_{\mathsf U} & -F_{\mathsf U-} \\ 0 & \gamma^{}_{\mathsf U}
\end{bmatrix}
\ ,\quad
\RMove3^{\mathsf U+} =
\begin{bmatrix}
\gamma^{}_{\mathsf U} & -F_{\mathsf U+} \\ 0 & \omega^{}_{\mathsf U}
\end{bmatrix}
\quad.
\end{equation}

\subsection{The compatibility of \texorpdfstring{$\widehat\Phi$}{Φ} with \texorpdfstring{$\RMove3$}{RIII}}
\label{sec:review:R4}

We next discuss the compatibility of the morphism $\widehat\Phi$ in \eqref{eq:Phi-hat} with the chain homotopy equivalences $\RMove3^{\mathsf O\pm}$ and $\RMove3^{\mathsf U\pm}$ in \eqref{eq:R3O} and \eqref{eq:R3U}.
More precisely, we prove the following result.

\begin{proposition}[{\cite[Theorem~4.1]{ItoYoshida2021}}]
\label{prop:R4}
There are degree $-1$ morphisms $\psi_{\mathsf O}$ and $\psi_{\mathsf U}$ which make the following diagrams chain homotopy commutative:
\begin{equation}\label{eq:R4}
\begin{tikzcd}
\KhOf*{\diagRiiiLeftMLRWith{a}{b}{c}} \ar[d,"{\RMove3^{\mathsf O-}}"'] \ar[r,"{\widehat\Phi_c}"] \ar[dr,dotted,"{\psi^{}_{\mathsf O}}" description] & \KhOf*{\diagRiiiLeftMRLWith{a}{b}{c}} \ar[d,"{\RMove3^{\mathsf{O}+}}"] \\
\KhOf*{\diagRiiiRightMLRWith{a}{b}{c}} \ar[r,"{\widehat\Phi_c}"] & \KhOf*{\diagRiiiRightMRLWith{a}{b}{c}}
\end{tikzcd}
\quad,
\begin{tikzcd}
\KhOf*{\diagRiiiLeftLRMWith{a}{b}{c}} \ar[r,"{\widehat\Phi_c}"] \ar[d,"{\RMove3^{\mathsf U-}}"'] \ar[dr,dotted,"{\psi^{}_{\mathsf U}}" description] & \KhOf*{\diagRiiiLeftRLMWith{a}{b}{c}} \ar[d,"{\RMove3^{\mathsf{U}-}}"] \\
\KhOf*{\diagRiiiRightLRMWith{a}{b}{c}} \ar[r,"{\widehat\Phi_c}"] & \KhOf*{\diagRiiiRightRLMWith{a}{b}{c}}
\end{tikzcd}
\quad.
\end{equation}
\end{proposition}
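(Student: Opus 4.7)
The plan is to construct $\psi^{}_{\mathsf O}$ and $\psi^{}_{\mathsf U}$ explicitly as matrices of cobordisms and then verify the coboundary identity in the $\mathrm{Hom}$-complex by a direct computation on each entry. First, I would use the cone identifications from~\eqref{eq:R3O-cones} so that the four corners of the left square in~\eqref{eq:R4} become mapping cones over the saddle $\delta_\pm$ corresponding to the crossing $c$. Under this identification, $\widehat\Phi_c$ takes the form of a morphism of mapping cones which, by the very definition of $\widehat\Phi$ in~\eqref{eq:Phihat-def}, is zero in every homological degree except in the degree of the vertical arrow, where it is given by the annulus difference $\Phi$ of~\eqref{eq:Phi-def}.

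Next, since $\RMove3^{\mathsf O\pm}$ is represented by the upper-triangular matrix~\eqref{eq:R3O-matrix}, the obstruction $\RMove3^{\mathsf O+}\widehat\Phi_c-\widehat\Phi_c\RMove3^{\mathsf O-}$ is a rectangular matrix of cobordisms whose entries are certain compositions of $\omega^{}_{\mathsf O}$, $\gamma^{}_{\mathsf O}$, $F_{\mathsf O\pm}$ with one of the two annulus summands making up $\Phi$. I would then posit an ansatz for $\psi^{}_{\mathsf O}$ of the form of a $2\times2$ matrix whose entries are cobordisms supported near $c$, obtained by attaching an annulus (in one of the two positions appearing in $\Phi$) to each of the cobordisms appearing in~\eqref{eq:def-R3O:gamma}, \eqref{eq:def-R3O:omega}, and~\eqref{eq:def-R3O:F}. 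The signs and the choice of position are dictated by requiring the equation
\begin{equation*}
\partial(\psi^{}_{\mathsf O})=\RMove3^{\mathsf O+}\widehat\Phi_c-\widehat\Phi_c\RMove3^{\mathsf O-}
\end{equation*}
to hold entry by entry in the $\mathrm{Hom}$-complex.

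Verification then reduces to a finite list of local identities among cobordisms, each of which follows from the Bar-Natan relations $S$, $T$, and $4\!\mathit{Tu}$ together with the already-established identities~\eqref{eq:FO-eqn}. The case of $\psi^{}_{\mathsf U}$ proceeds \emph{mutatis mutandis}, using~\eqref{eq:def-R3U:gamma}, \eqref{eq:def-R3U:omega}, \eqref{eq:def-R3U:F}, and~\eqref{eq:FU-eqn} in place of their $\mathsf O$-counterparts.

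The main obstacle I anticipate is the bookkeeping: a priori there are several candidate homotopies that differ by null-homotopic pieces, so writing down a convenient choice of $\psi^{}_{\mathsf O}$ and $\psi^{}_{\mathsf U}$ is not canonical. Since these chain homotopies reappear explicitly in the statement of Main Theorem~\ref{mainTHM}\ref{sub:mainTHM:hexprism}, the real content is to make a choice that is symmetric enough for the hexagon-prism obstruction $\widehat\Phi_a\psi^{}_{\mathsf O}\widehat\Phi_a-\widehat\Phi_b\psi^{}_{\mathsf U}\widehat\Phi_b$ to vanish on the nose, rather than merely up to a further homotopy.
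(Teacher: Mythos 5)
Your plan is essentially the paper's own proof: the paper likewise passes to the cone decompositions \eqref{eq:prf:R4:R3Ocone}--\eqref{eq:prf:R4:R3Ucone}, writes $\widehat\Phi_c$ as the matrix $\left[\begin{smallmatrix}0&0\\\Phi&0\end{smallmatrix}\right]$, and takes $\psi_{\mathsf O}$, $\psi_{\mathsf U}$ to be explicit $2\times2$ matrices of annulus-decorated cobordisms (the homotopy $G$ of \eqref{eq:R4:GO}--\eqref{eq:R4:GU} and the $2$-homotopies $\Psi_{\pm}$ of \eqref{eq:def-R4:PsiO} and \eqref{eq:def-R4:PsiU}), verifying $\partial(\psi)$ entry by entry exactly as you propose. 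The only organizational difference is that the paper packages the entries as fillers of the homotopy-commutative cubes \eqref{eq:R4cube-O} and \eqref{eq:R4cube-U}, with the residual identity $\delta_+\Psi_{\mathsf O-}=\Psi_{\mathsf O+}\delta_-$ checked by a single extra cobordism computation, which is one of the ``finite list of local identities'' your plan anticipates.
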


In view of ``Kauffman trick'', we begin with the compatibility of the morphism $\Phi$ in \eqref{eq:Phi}.
By direct computation, one can verify the following.

\begin{lemma}
There is a homotopy commutative square
\begin{equation}
\label{eq:Phi-GO}
\begin{tikzcd}
\KhOf*{\diagCrossVRivOL} \ar[d,"-\omega^{}_{\mathsf O}"'] \ar[r,"\Phi"] \ar[dr,dotted,"G_{\mathsf O}" description] & \KhOf*{\diagCrossVRivOL} \ar[d,"\omega_{\mathsf O}"] \\
\KhOf*{\diagCrossVRivOR} \ar[r,"\Phi"'] & \KhOf*{\diagCrossVRivOR}
\end{tikzcd}
\ ,\quad
\begin{tikzcd}
\KhOf*{\diagCrossVRivUL} \ar[d,"-\omega^{}_{\mathsf U}"'] \ar[r,"\Phi"] \ar[dr,dotted,"G_{\mathsf U}" description] & \KhOf*{\diagCrossVRivUL} \ar[d,"\omega_{\mathsf U}"] \\
\KhOf*{\diagCrossVRivOR} \ar[r,"\Phi"] & \KhOf*{\diagCrossVRivUR}
\end{tikzcd}
\quad,
\end{equation}
where the chain homotopies $G_{\mathsf O}=\{G_{\mathsf O}^i\}_i$ and $G_{\mathsf U}=\{G_{\mathsf U}^i\}_i$ are given by
\begin{gather}
\label{eq:R4:GO}
G_{\mathsf O}^0 \coloneqq -\BordCompGTrd -\BordCompGFth
\ ,\quad
G_{\mathsf O}^1 \coloneqq \BordCompGFst + \BordCompGSnd
\quad,\displaybreak[2]\\[1ex]
\label{eq:R4:GU}
G_{\mathsf U}^0\coloneqq -\BordCompGTrdVar +\BordCompGFthVar
\ ,\quad
G_{\mathsf U}^1\coloneqq \BordCompGFstVar -\BordCompGSndVar
\quad,
\end{gather}
and zero in the other degrees.
More precisely, $G_{\mathsf O}$ and $G_{\mathsf U}$ respectively satisfy the following equations in the $\mathrm{Hom}$-complexes:
\begin{equation}
\label{eq:G-eqn}
\partial(G_{\mathsf O}) = \Phi(-\omega^{}_{\mathsf O}) - \omega^{}_{\mathsf O}\Phi
\ ,\quad
\partial(G_{\mathsf U}) = \Phi(-\omega^{}_{\mathsf U}) - \omega^{}_{\mathsf U}\Phi
\quad.
\end{equation}
\end{lemma}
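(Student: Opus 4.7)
The plan is to verify the equations in \eqref{eq:G-eqn} degree by degree as explicit identities in the morphism spaces of $\Cob$, using the $S$-, $T$-, and $4\!\mathit{Tu}$-relations. Since $\Phi$ is concentrated in degree $0$ and $\omega_{\mathsf O}^{}, \omega_{\mathsf U}^{}$ are concentrated in degree $0$ as well (see \eqref{eq:def-R3O:omega} and \eqref{eq:def-R3U:omega}), while the horizontal complexes $\KhOf*{\diagCrossVRivOL}$ etc.\ have the internal differential essentially built from $\delta_{\pm}$ at the two remaining crossings $a$ and $b$, the chain homotopy equation has nontrivial components only in degrees $0$ and $1$. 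Concretely, writing $d$ for the ambient differential, one must check
\[
d\,G^0_{\mathsf O} + G^1_{\mathsf O}\,d
\;=\; \Phi\circ(-\omega^0_{\mathsf O}) - \omega^0_{\mathsf O}\circ\Phi
\]
at position $0$, and the analogous equation at position $1$; and similarly for $\mathsf U$.

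First I would isolate the ``active region'' where the morphisms $\Phi$ and $\omega^{}_{\ast}$ act, namely the strip containing the two $\RMove2$-inverse crossings plus the one crossing on which $\Phi$ operates. The identity parts on the three trivial strands of $\diagCrossVRivOL$ (respectively $\diagCrossVRivUL$) are irrelevant to the calculation, so one may treat the computation locally. The right-hand side $\Phi(-\omega^{}_{\mathsf O}) - \omega^{}_{\mathsf O}\Phi$ then decomposes into a small finite sum: $\Phi$ is the difference $\BordPhiFst - \BordPhiSnd$ of two annulus attachments by \eqref{eq:Phi-def}, while $\omega^0_{\mathsf O}$ is a signed sum of four elementary cobordisms by \eqref{eq:def-R3O:omega}; composing these yields eight cobordisms on each side, some of which cancel directly and others of which simplify via $T$-relation (capping off a genus-$1$ piece) or $4\!\mathit{Tu}$ (reconfiguring an annulus that links a saddle).

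Next I would compute the left-hand side $\partial(G_{\mathsf O})$ using the explicit expressions \eqref{eq:R4:GO}. The boundary $\partial$ in the $\mathrm{Hom}$-complex is given by pre- and post-composition with $\delta_{\pm}$ at the two $\RMove2$-inverse crossings; for each of the four cobordisms appearing in $G^0_{\mathsf O}$ and $G^1_{\mathsf O}$, this produces saddled versions that either recombine an annulus with a cobordism tube or separate them. The key observation is that the annuli in $G_{\mathsf O}$ and in $\Phi$ are designed so that, after applying a saddle on one of the $\RMove2$-inverse crossings, one obtains exactly the configurations appearing in $\Phi\omega^{}_{\mathsf O}$ and $\omega^{}_{\mathsf O}\Phi$. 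The matching is then a term-by-term comparison with signs; the $S$- and $T$-relations are invoked to kill any sphere or genus-$1$ components that appear as a byproduct. The $\mathsf U$ case is entirely parallel, with the sign conventions dictated by \eqref{eq:def-R3U:omega} and \eqref{eq:R4:GU} differing exactly by the reflection that distinguishes the $\mathsf U$ decomposition of $\RMove3$ from the $\mathsf O$ one.

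The main obstacle is pure bookkeeping: keeping track of signs, orientations, and how the annulus in $\Phi$ threads through the two saddle regions of $\omega^{}_{\ast}$. I expect no conceptual difficulty beyond this, because the homotopy $G_{\ast}$ has already been written down in \eqref{eq:R4:GO}--\eqref{eq:R4:GU}; the work is to recognize each of the (at most a dozen) resulting cobordism classes after applying $S$, $T$, and $4\!\mathit{Tu}$, and check that the signed sum on the two sides of \eqref{eq:G-eqn} agrees. To control the bookkeeping I would tabulate, for each cobordism in $G^i_{\ast}$, its pre- and post-composition with every $\delta_{\pm}$ occurring in $d$, and cross-reference this table against the expansions of $\Phi\omega^{}_{\ast}$ and $\omega^{}_{\ast}\Phi$.
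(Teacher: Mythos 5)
Your proposal is the direct cobordism-by-cobordism verification that the paper itself invokes (the lemma is stated with only the remark ``by direct computation, one can verify the following''), so it takes essentially the same approach; the plan of expanding $\Phi\omega^{}_{\ast}$ and $\omega^{}_{\ast}\Phi$ into their constituent terms, computing $\partial(G_{\ast})$ via pre- and post-composition with the saddles at the remaining crossings, and matching terms using the $S$-, $T$-, and $4\!\mathit{Tu}$-relations is the correct one. The only small point to keep in mind when tabulating is that the identity $\partial(G_{\ast})=\Phi(-\omega^{}_{\ast})-\omega^{}_{\ast}\Phi$ must also be checked at the homological position where the right-hand side vanishes but $G^0_{\ast}$ post-composed with the incoming differential could a priori be nonzero, not only at positions $0$ and $1$.
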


Now, we have the following diagram of morphisms of complexes in $\Cob$:
\begin{equation}
\label{eq:R4cube-O}
\begin{tikzcd}[row sep=1ex,column sep=1em]
& \KhOf*{\diagCrossHRivOL} \ar[rr,"\delta_-"] \ar[dl] \ar[dd,"\vphantom{X}" description, "\gamma^{}_{\mathsf O}" pos=0.8] && \KhOf*{\diagCrossVRivOL} \ar[rr] \ar[dl,"\Phi"] \ar[dd,"\vphantom{X}" description, "-\omega^{}_{\mathsf O}" pos=0.8] && 0 \ar[dl] \ar[dd] \\
0 \ar[rr] \ar[dd] && \KhOf*{\diagCrossVRivOL} \ar[rr,"-\delta_+" pos=0.8] \ar[dd,"\omega_{\mathsf O}" pos=0.8] && \KhOf*{\diagCrossHRivOL} \ar[dd,"\gamma^{}_{\mathsf O}" pos=0.8] & \\
& \KhOf*{\diagCrossHRivOR} \ar[r,dash] \ar[dl] & {} \ar[r,"\delta_-" pos=0.3] & \KhOf*{\diagCrossVRivOR} \ar[r,dash] \ar[dl,"\Phi"] & {} \ar[r] & 0 \ar[dl] \\
0 \ar[rr] && \KhOf*{\diagCrossVRivOR} \ar[rr,"-\delta_+"] && \KhOf*{\diagCrossHRivOR} &
\end{tikzcd}
\quad,
\end{equation}
where all square faces are strictly commutative or commutative up to chain homotopies $F_{\mathsf O-}$, $G_{\mathsf O}$, and $F_{\mathsf O+}$.
The last ingredients for the chain homotopy $\psi_{\mathsf O}$ in \cref{prop:R4} are the $2$-homotopies bounding the homotopy commutative cubes in \eqref{eq:R4cube-O}; indeed, since every complexes are supported in degrees $-1,0,1$, the cycles of degrees higher than $2$ are trivial for degree reason.
We define the elements
\begin{equation*}
\Psi_{\mathsf O-}\in\mathrm{Hom}^{-2}\biggl(\KhOf*{\diagCrossHRivOL},\KhOf*{\diagCrossVRivOR}\biggr)
\ ,\quad
\Psi_{\mathsf O+}\in\mathrm{Hom}^{-2}\biggl(\KhOf*{\diagCrossVRivOL}, \KhOf*{\diagCrossHRivOR}\biggr)
\end{equation*}
by the following cobordisms (and zeros in the other degrees):
\begin{equation}
\label{eq:def-R4:PsiO}
(\Psi_{\mathsf O-})^1 \coloneqq -\BordPsiMinus
\ ,\quad
(\Psi_{\mathsf O+})^1\coloneqq
-\BordPsiPlus
\quad.
\end{equation}
The direct computation shows that these elements satisfy the following equations in the $\mathrm{Hom}$-complexes (e.g.~see \cite[Lemma~4.1]{ItoYoshida2021}):
\begin{equation}
\label{eq:PsiO-eqn}
\partial(\Psi_{\mathsf O-}) = -\Phi F_{\mathsf O-} -  G_{\mathsf O}\delta_-
\ ,\quad
\partial(\Psi_{\mathsf O+}) = -\delta_+ G_{\mathsf O} + F_{\mathsf O+}\Phi
\quad.
\end{equation}

Similarly, regarding the homotopy commutative diagram
\begin{equation}
\label{eq:R4cube-U}
\begin{tikzcd}[row sep=1ex,column sep=1em]
& \KhOf*{\diagCrossHRivUL} \ar[rr,"\delta_-"] \ar[dl] \ar[dd,"\vphantom{X}" description, "\gamma^{}_{\mathsf U}" pos=0.8] && \KhOf*{\diagCrossVRivUL} \ar[rr] \ar[dl,"\Phi"] \ar[dd,"\vphantom{X}" description, "-\omega^{}_{\mathsf U}" pos=0.8] && 0 \ar[dl] \ar[dd] \\
0 \ar[rr] \ar[dd] && \KhOf*{\diagCrossVRivUL} \ar[rr,"-\delta_+" pos=0.8] \ar[dd,"\omega_{\mathsf U}" pos=0.8] && \KhOf*{\diagCrossHRivUL} \ar[dd,"\gamma^{}_{\mathsf U}" pos=0.8] & \\
& \KhOf*{\diagCrossHRivUR} \ar[r,dash] \ar[dl] & {} \ar[r,"\delta_-" pos=0.3] & \KhOf*{\diagCrossVRivUR} \ar[r,dash] \ar[dl,"\Phi"] & {} \ar[r] & 0 \ar[dl] \\
0 \ar[rr] && \KhOf*{\diagCrossVRivUR} \ar[rr,"-\delta_+"] && \KhOf*{\diagCrossHRivUR} &
\end{tikzcd}
\quad,
\end{equation}
we define the elements
\begin{equation*}
\Psi_{\mathsf U-}\in\mathrm{Hom}^{-2}\biggl(\KhOf*{\diagCrossHRivUL},\KhOf*{\diagCrossVRivUR}\biggr)
\ ,\quad
\Psi_{\mathsf U+}\in\mathrm{Hom}^{-2}\biggl(\KhOf*{\diagCrossVRivUL}, \KhOf*{\diagCrossHRivUR}\biggr)
\end{equation*}
by the following cobordisms (and zeros in the other degrees):
\begin{equation}
\label{eq:def-R4:PsiU}
(\Psi_{\mathsf U-})^1 \coloneqq -\BordPsiMinusVar
\ ,\quad
(\Psi_{\mathsf U+})^1\coloneqq -\BordPsiPlusVar
\quad.
\end{equation}
As in the case of $\Psi_{\mathsf O\pm}$, they satisfy the following equations:
\begin{equation}
\label{eq:PsiU-eqn}
\partial(\Psi_{\mathsf U-}) = -\Phi F_{\mathsf U-} -  G_{\mathsf U}\delta_-
\ ,\quad
\partial(\Psi_{\mathsf U+}) = -\delta_+ G_{\mathsf U} + F_{\mathsf U+}\Phi
\quad.
\end{equation}

\begin{proof}[Proof of \cref{prop:R4}]
In view of \eqref{eq:R3O-cones}, we obtain isomorphisms
\begin{equation}
\label{eq:prf:R4:R3Ocone}
\KhOf*{\diagRiiiLeftMLR}^i \cong \KhOf*{\diagCrossVRivOL}^i \oplus \KhOf*{\diagCrossHRivOL}^{i+1}
\ ,\quad
\KhOf*{\diagRiiiRightMRL}^i \cong \KhOf*{\diagCrossHRivOR}^{i-1} \oplus \KhOf*{\diagCrossVRivOR}^i
\quad,
\end{equation}
with differentials represented respectively by the matrices
\begin{equation}
\label{eq:prf:R4:R3Ocone-diff}
D_1\coloneqq \begin{bmatrix} d & \delta_- \\ 0 & -d \end{bmatrix}
\ ,\quad
D_2\coloneqq \begin{bmatrix} -d & -\delta_+ \\ 0 & d \end{bmatrix}
\quad.
\end{equation}
Similarly, we also have isomorphisms
\begin{equation}
\label{eq:prf:R4:R3Ucone}
\KhOf*{\diagRiiiLeftLRM}^i \cong \KhOf*{\diagCrossVRivUL}^i \oplus \KhOf*{\diagCrossHRivUL}^{i+1}
\ ,\quad
\KhOf*{\diagRiiiRightRLM}^i \cong \KhOf*{\diagCrossHRivUR}^{i-1} \oplus \KhOf*{\diagCrossVRivUR}^i
\quad,
\end{equation}
with the differentials given in the same formulas as \eqref{eq:prf:R4:R3Ocone-diff}.
We then define the elements
\[
\psi_{\mathsf O}^{} \in \mathrm{Hom}^{-1}\biggl(\KhOf*{\diagRiiiLeftMLR}, \KhOf*{\diagRiiiRightMRL}\biggr)
\ ,\quad
\psi_{\mathsf U}^{} \in \mathrm{Hom}^{-1}\biggl(\KhOf*{\diagRiiiLeftLRM}, \KhOf*{\diagRiiiRightRLM}\biggr)
\]
which are represented by the following matrices with respect to the decompositions~\eqref{eq:prf:R4:R3Ocone} and \eqref{eq:prf:R4:R3Ucone}:
\begin{equation}
\label{eq:def-R4:psi}
\psi_{\mathsf O} \coloneqq
\begin{bmatrix} -\Psi_{\mathsf O+} & 0 \\ G_{\mathsf O} & -\Psi_{\mathsf O-} \end{bmatrix}
\ ,\quad
\psi_{\mathsf U} \coloneqq
\begin{bmatrix} -\Psi_{\mathsf U_+} & 0 \\ G_{\mathsf U} & -\Psi_{\mathsf U-} \end{bmatrix}
\quad.
\end{equation}
By virtue of the equations~\eqref{eq:R3O-matrix}, \eqref{eq:G-eqn}, and \eqref{eq:PsiO-eqn}, we have
\begin{equation}
\label{eq:prf:R4:psiO}
\begin{split}
\partial(\psi_{\mathsf O})
&= D_2\psi_{\mathsf O} + \psi_{\mathsf O}D_1
= \begin{bmatrix} -d & -\delta_+ \\ 0 & d\end{bmatrix}
\begin{bmatrix} -\Psi_{\mathsf O+} & 0 \\ G_{\mathsf O} & -\Psi_{\mathsf O-} \end{bmatrix}
+
\begin{bmatrix} -\Psi_{\mathsf O+} & 0 \\ G_{\mathsf O} & -\Psi_{\mathsf O-} \end{bmatrix}
\begin{bmatrix} d & \delta_- \\ 0 & -d\end{bmatrix}
\\
&= \begin{bmatrix}
F_{\mathsf O+}\Phi & \delta_+\Psi_{\mathsf O-} - \Psi_{\mathsf O+}\delta_- \\
-\omega_{\mathsf O}^{}\Phi - \Phi\omega_{\mathsf O}^{} & \Phi F_{\mathsf O-}
\end{bmatrix}
\\
&= 
\begin{bmatrix} 0 & 0 \\ \Phi & 0 \end{bmatrix}
\begin{bmatrix} -\omega_{\mathsf O}^{} & -F_{\mathsf O-} \\ 0 & \gamma_{\mathsf O}^{} \end{bmatrix}
- \begin{bmatrix} \gamma_{\mathsf O}^{} & -F_{\mathsf O+} \\ 0 & \omega_{\mathsf O}^{} \end{bmatrix}
\begin{bmatrix} 0 & 0 \\ \Phi & 0 \end{bmatrix}
+ \begin{bmatrix} 0 & \delta_+\Psi_{\mathsf O-} - \Psi_{\mathsf O+}\delta_- \\ 0 & 0 \end{bmatrix}
\\
&= \widehat\Phi_c\RMove3^{\mathsf O-} - \RMove3^{\mathsf O+}\widehat\Phi_c
+ \begin{bmatrix} 0 & \delta_+\Psi_{\mathsf O-} - \Psi_{\mathsf O+}\delta_- \\ 0 & 0 \end{bmatrix}
\quad.
\end{split}
\end{equation}
Observe that the last term in the right hand side of \eqref{eq:prf:R4:psiO} vanishes; in fact, we have
\[
\delta_+^{-1}\Psi_{\mathsf O-}^1
= -\;\BordPsiPlus\BordLeftDeltaNOO
= -\;\BordRightDeltaPll\BordPsiMinus
= \Psi_{\mathsf O+}^1\delta_-^1
\quad.
\]
Thus, $\psi_{\mathsf O}$ makes the left square in \eqref{eq:R4} commute.
The similar computation shows that $\psi_{\mathsf U}$ makes the right commute, so we obtain the result.
\end{proof}

\section{Proof of Main~Theorem A}
\label{sec:proof}

We now begin the proof of \cref{mainTHM}.
We directly compute the two compositions in the diagrams~\eqref{eq:hexagon-cross}.

For this, we denote by $\Phi_{\mathsf L}$ and $\Phi_{\mathsf R}$ the morphisms
\[
\Phi_{\mathsf L},\Phi_{\mathsf R}:
\diagTriviii \to \diagTriviii
\]
in the category $\Cob$ which applies the morphism $\Phi$ given in \eqref{eq:Phi-def} to the left and the right two strands respectively.
Note that the tangle in the domain (and the codomain) appears as a common state in the homological degree $0$ of the Khovanov complexes in \cref{mainTHM}.
Specifically, we have canonical inclusions
\[
\begin{gathered}
\diagTriviii = \diagRiiiLeftUParVVV
\hookrightarrow \KhOf*{\diagRiiiLeftLMR}^0, \KhOf*{\diagDeltaBrDLeft}^0
\quad,\\
\diagTriviii = \diagRiiiRightUParVVV
\hookrightarrow \KhOf*{\diagDeltaBrDRight}^0, \KhOf*{\diagRiiiRightRML}^0
\quad.
\end{gathered}
\]
In what follows, we refer them as the \emph{state $s_0$} in these complexes.

\begin{lemma}\label{lem:hexneg}
The compositions
\begin{gather}
\label{eq:hexneg-cntclock}
\KhOf*{\diagRiiiLeftLMRWith{a}{b}{c}}
\xrightarrow{\widehat\Phi_a} \KhOf*{\diagRiiiLeftMLRWith{a}{b}{c}}
\xrightarrow{\RMove3^{\mathsf O-}} \KhOf*{\diagRiiiRightMLRWith{a}{b}{c}}
\xrightarrow{\widehat\Phi_a} \KhOf*{\diagDeltaBrDRightWith{a}{b}{c}}
\quad,\\
\label{eq:hexneg-clock}
\KhOf*{\diagRiiiLeftLMRWith{a}{b}{c}}
\xrightarrow{\widehat\Phi_b} \KhOf*{\diagRiiiLeftLRMWith{a}{b}{c}}
\xrightarrow{\RMove3^{\mathsf U-}} \KhOf*{\diagRiiiRightLRMWith{a}{b}{c}}
\xrightarrow{\widehat\Phi_b} \KhOf*{\diagDeltaBrDRightWith{a}{b}{c}}
\quad,
\end{gather}
both vanish except on the state $s_0$ where they equal to the composition
\[
-\Phi_{\mathsf R}\Phi_{\mathsf L}:\;
\diagRiiiLeftUParVVV = \diagTriviii \to \diagTriviii = \diagRiiiRightUParVVV
\quad.
\]
\end{lemma}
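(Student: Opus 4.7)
The plan is to compute each of the two compositions directly by unfolding the matrix representations developed in Section~\ref{sec:review:R3}. First, I would use the Kauffman-trick decompositions in~\eqref{eq:R3O-cones} to present both the domain and the codomain of $\RMove3^{\mathsf O-}$ as mapping cones along the saddle at crossing~$c$. Since $\widehat\Phi_a$ is supported on crossing~$a$ only, which is disjoint from~$c$, it is block-diagonal with respect to these cone decompositions. Moreover, by~\eqref{eq:Phihat-def}, $\widehat\Phi$ is zero outside the single homological degree at which its crossing is vertically smoothed, where it acts by $\Phi$. Consequently, any nonzero contribution to the matrix product $\widehat\Phi_a \circ \RMove3^{\mathsf O-} \circ \widehat\Phi_a$ must come from matrix entries of $\RMove3^{\mathsf O-}$ that both originate and land in the slice of the resolution cube in which crossing~$a$ is vertically smoothed.

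Second, I would examine the matrix~\eqref{eq:R3O-matrix} entry by entry. The components of $\gamma^{}_{\mathsf O}$ and $F_{\mathsf O-}$ displayed in~\eqref{eq:def-R3O:gamma} and~\eqref{eq:def-R3O:F} are supported in homological degrees at which at least one of crossings~$a$ or~$b$ is not in its degree-$0$ resolution, so they vanish after being sandwiched by the two $\widehat\Phi_a$'s. Only $-\omega^{}_{\mathsf O}$ can contribute; by~\eqref{eq:def-R3O:omega} it lives in the single vertex of the 2-dimensional cube in which both $a$ and~$b$ are vertically smoothed. Together with the constraint on crossing~$c$ supplied by the cone decomposition, this pins down precisely the state~$s_0$ and establishes the vanishing on all other states.

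Third, I would evaluate the resulting composition $\widehat\Phi_a \circ (-\omega^{}_{\mathsf O}^0) \circ \widehat\Phi_a$ on $s_0$. Because the R3 move interchanges the geometric position of crossing~$a$ between opposite sides of the triangle, the two outer copies of $\Phi$ act on different pairs of strands, producing the factors $\Phi_{\mathsf L}$ and $\Phi_{\mathsf R}$ in the appropriate order. The four summands of $\omega^{}_{\mathsf O}^0$ in~\eqref{eq:def-R3O:omega}, once sandwiched by the two $\Phi$'s and simplified using the $S$-, $T$-, and $4\!\mathit{Tu}$-relations of $\Cob$, collapse to a single identity contribution, so that the net value is $-\Phi_{\mathsf R}\Phi_{\mathsf L}$. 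An entirely parallel argument, with~\eqref{eq:R3U-matrix}, \eqref{eq:def-R3U:omega}, and~\eqref{eq:def-R3U:F} in place of their ``$\mathsf O$''-counterparts, handles~\eqref{eq:hexneg-clock}: crossing~$b$ also shifts position under the R3 move, so the two copies of $\widehat\Phi_b$ again contribute $\Phi_{\mathsf L}$ and $\Phi_{\mathsf R}$, and the identical sign accounting yields $-\Phi_{\mathsf R}\Phi_{\mathsf L}$.

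The main obstacle is the cobordism bookkeeping inside $\Cob$. The sums defining $\omega^{}_{\mathsf O}^0$ and $\omega^{}_{\mathsf U}^0$ contain four distinct terms carrying assorted attached handles and annuli, and the sandwich $\Phi \circ \omega^0 \circ \Phi$ produces a proliferation of composite cobordisms with additional handles; verifying that these collapse, via neck-cutting and the three ambient relations, to exactly $-\Phi_{\mathsf R}\Phi_{\mathsf L}$ is the computationally heaviest step of the argument, but it is purely local (restricted to a neighborhood of crossing~$a$ or $b$, tensored with the identity on the remaining strand).
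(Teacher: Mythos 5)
Your proposal matches the paper's proof essentially step for step: it uses the matrix form \eqref{eq:R3O-matrix} of $\RMove3^{\mathsf O-}$, the observation that $\widehat\Phi_a$ is supported only on states where $a$ is smoothed vertically (killing the $\gamma^{}_{\mathsf O}$ and $F_{\mathsf O-}$ entries), and the reduction of the sandwiched $-\omega^{}_{\mathsf O}$ to $-\Phi_{\mathsf R}\Phi_{\mathsf L}$ on $s_0$, with the parallel argument for the $\mathsf U$ case. One small clarification: in the paper the four summands of $\omega_{\mathsf O}^0$ are eliminated mostly by state bookkeeping (two of them do not have $s_0$ as both domain and codomain, and the remaining non-identity term dies upon composition with $\Phi$), rather than by neck-cutting, but this does not affect the argument.
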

\begin{proof}
By \eqref{eq:R3O-matrix}, the composition~\eqref{eq:hexneg-cntclock} equals
\[
\widehat\Phi_a\RMove3^{\mathsf O-}\widehat\Phi_a
=
\begin{bmatrix} -\widehat\Phi_a\omega_{\mathsf O}^{}\widehat\Phi_a & -\widehat\Phi_aF_{\mathsf O-}\widehat\Phi_a \\ 0 & \widehat\Phi_a\gamma_{\mathsf O}^{}\widehat\Phi_a \end{bmatrix}
\quad.
\]
Notice that the morphism $\widehat\Phi_a$ is non-zero only on the states where the crossing $a$ is smoothed vertically, and only such states appear in its image.
By the definitions~\eqref{eq:def-R3O:gamma} and \eqref{eq:def-R3O:F} of $\gamma_{\mathsf O}^{}$ and $F_{\mathsf O-}$, we hence have $\widehat\Phi_a\gamma_{\mathsf O}^{}\widehat\Phi_a = 0$ and $\widehat\Phi_a F_{\mathsf O} = 0$.
For the same reason, regarding \eqref{eq:def-R3O:omega}, one sees that the composition $\widehat\Phi\omega_{\mathsf O}^{}\widehat\Phi$ also vanishes except on the state $s_0$ where it is given by
\[
\widehat\Phi_a^0\omega_{\mathsf O}^0\widehat\Phi_a^0
= \widehat\Phi_a^0\circ\left(\BordTriId + \BordRTwoBarRUnit\right)\circ \widehat\Phi_a^0
= \Phi_{\mathsf R}\circ \BordTriId \circ \Phi_{\mathsf L}
= \Phi_{\mathsf R}\Phi_{\mathsf L}
\quad,
\]
which ensures the assertion for \eqref{eq:hexneg-cntclock}.

As for the other composition, by \eqref{eq:R3U-matrix}, we have
\[
\widehat\Phi_b\RMove3^{\mathsf U-}\widehat\Phi_b
=
\begin{bmatrix} -\widehat\Phi_b\omega_{\mathsf U}^{}\widehat\Phi_b & -\widehat\Phi_b F_{\mathsf U-}\widehat\Phi_b \\ 0 & \widehat\Phi_b\gamma_{\mathsf U}^{}\widehat\Phi_b \end{bmatrix}
\quad.
\]
By the same argument as above, we have $\widehat\Phi_b F_{\mathsf U-} = 0$, $\widehat\Phi_b\gamma_{\mathsf U}\widehat\Phi_b = 0$, and $\widehat\Phi_b\omega_{\mathsf U}\widehat\Phi_b = 0$ except the state $s_0$ where
\[
\widehat\Phi_b^0\omega_{\mathsf U}^0\widehat\Phi_b^0
= \widehat\Phi_b^0\circ \left(\BordTriId - \BordRTwoBarRUnit\right)\circ\widehat\Phi_b^0
= \Phi_{\mathsf R}\circ\BordTriId\circ\Phi_{\mathsf L}
= \Phi_{\mathsf R}\Phi_{\mathsf L}
\quad.
\]
This completes the proof.
\end{proof}

\begin{lemma}\label{lem:hexpos}
The compositions
\begin{gather}
\label{eq:hexpos-cntclock}
\KhOf*{\diagDeltaBrDLeftWith{a}{b}{c}}
\xrightarrow{\widehat\Phi_a} \KhOf*{\diagRiiiLeftMRLWith{a}{b}{c}}
\xrightarrow{\RMove3^{\mathsf O+}} \KhOf*{\diagRiiiRightMRLWith{a}{b}{c}}
\xrightarrow{\widehat\Phi_a} \KhOf*{\diagRiiiRightRMLWith{a}{b}{c}}
\quad,\\
\label{eq:hexpos-clock}
\KhOf*{\diagDeltaBrDLeftWith{a}{b}{c}}
\xrightarrow{\widehat\Phi_b} \KhOf*{\diagRiiiLeftRLMWith{a}{b}{c}}
\xrightarrow{\RMove3^{\mathsf U+}} \KhOf*{\diagRiiiRightRLMWith{a}{b}{c}}
\xrightarrow{\widehat\Phi_b} \KhOf*{\diagRiiiRightRMLWith{a}{b}{c}}
\quad,
\end{gather}
both vanishes except at the state $s_0$ where they equal to the composition
\[
\Phi_{\mathsf R}\Phi_{\mathsf L}:\;
\diagRiiiLeftUParVVV = \diagTriviii \to \diagTriviii = \diagRiiiRightUParVVV
\quad.
\]
\end{lemma}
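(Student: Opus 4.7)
The plan is to imitate the proof of \cref{lem:hexneg} essentially verbatim, substituting the matrix representations of $\RMove3^{\mathsf O+}$ and $\RMove3^{\mathsf U+}$ from \eqref{eq:R3O-matrix} and \eqref{eq:R3U-matrix} in place of those for $\RMove3^{\mathsf O-}$ and $\RMove3^{\mathsf U-}$. Specifically, I will expand
\[
\widehat\Phi_a\RMove3^{\mathsf O+}\widehat\Phi_a
=
\begin{bmatrix} \widehat\Phi_a\gamma_{\mathsf O}^{}\widehat\Phi_a & -\widehat\Phi_a F_{\mathsf O+}\widehat\Phi_a \\ 0 & \widehat\Phi_a\omega_{\mathsf O}^{}\widehat\Phi_a \end{bmatrix}
\qquad\text{and}\qquad
\widehat\Phi_b\RMove3^{\mathsf U+}\widehat\Phi_b
=
\begin{bmatrix} \widehat\Phi_b\gamma_{\mathsf U}^{}\widehat\Phi_b & -\widehat\Phi_b F_{\mathsf U+}\widehat\Phi_b \\ 0 & \widehat\Phi_b\omega_{\mathsf U}^{}\widehat\Phi_b \end{bmatrix}
\]
and invoke the key observation already used in \cref{lem:hexneg}: the morphism $\widehat\Phi$ in \eqref{eq:Phihat-def} is supported only on states where the relevant crossing is smoothed vertically, both in domain and image.

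The next step is to kill the unwanted entries. Inspecting the definitions \eqref{eq:def-R3O:gamma}, \eqref{eq:def-R3O:F} of $\gamma_{\mathsf O}^{}$ and $F_{\mathsf O+}$ (and similarly \eqref{eq:def-R3U:gamma}, \eqref{eq:def-R3U:F} for the $\mathsf U$-variants), one sees that each nonzero component involves a horizontal smoothing at the crossing being altered by $\widehat\Phi$, so the pre- and post-composition with $\widehat\Phi$ forces these entries to vanish. The only surviving contribution comes from the diagonal $\widehat\Phi\omega\widehat\Phi$ term; examining the four summands of $\omega_{\mathsf O}^0$ in \eqref{eq:def-R3O:omega} (resp.\ $\omega_{\mathsf U}^0$ in \eqref{eq:def-R3U:omega}), three of them have left or right saddle-cap/cup components that are annihilated once sandwiched between copies of $\widehat\Phi$, leaving only the identity cobordism.

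Evaluating on the state $s_0$ then gives, for the $\mathsf O$-composition,
\[
\widehat\Phi_a^0\omega_{\mathsf O}^0\widehat\Phi_a^0
= \Phi_{\mathsf R}\circ\BordTriId\circ\Phi_{\mathsf L}
= \Phi_{\mathsf R}\Phi_{\mathsf L}
\quad,
\]
and analogously for the $\mathsf U$-composition using $\omega_{\mathsf U}^0 = \BordTriId - \BordRTwoBarRUnit -\BordRTwoLCounit + \BordRTwoLCounitRUnit$, where again only the identity piece survives the sandwich. Note that the sign is now $+\Phi_{\mathsf R}\Phi_{\mathsf L}$ rather than $-\Phi_{\mathsf R}\Phi_{\mathsf L}$, because \eqref{eq:R3O-matrix} and \eqref{eq:R3U-matrix} place $\omega$ with a positive sign in $\RMove3^{\ast+}$ whereas in $\RMove3^{\ast-}$ it appears as $-\omega$.

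I do not expect any real obstacle: the proof is a mechanical translation of \cref{lem:hexneg} with the signs flipped. The only thing to be careful about is the bookkeeping of which cobordism summands in $\omega_{\mathsf O}$ and $\omega_{\mathsf U}$ survive the sandwich by $\widehat\Phi$, but this is entirely routine once the support condition on $\widehat\Phi$ is used. Thus the lemma follows by the same computation as \cref{lem:hexneg}, with the overall sign changed according to the sign of $\omega$ in \eqref{eq:R3O-matrix} and \eqref{eq:R3U-matrix}.
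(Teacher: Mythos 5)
Your proposal is correct and is exactly the computation the paper intends when it writes that the proof is ``almost identical to \cref{lem:hexneg}'': you expand $\RMove3^{\mathsf O+}$ and $\RMove3^{\mathsf U+}$ via \eqref{eq:R3O-matrix} and \eqref{eq:R3U-matrix}, kill the $\gamma$- and $F$-entries by the vertical-smoothing support of $\widehat\Phi$, and observe that only the identity summand of $\omega^0_{\mathsf O}$ (resp.\ $\omega^0_{\mathsf U}$) survives the sandwich, now with the sign $+\Phi_{\mathsf R}\Phi_{\mathsf L}$ because $\omega$ enters $\RMove3^{\ast+}$ without the minus sign. No gaps.
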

\begin{proof}
Since the proof is almost identical to \cref{lem:hexneg}, we omit it.
\end{proof}

\begin{proof}[Proof of \cref{mainTHM}]
The part~\ref{sub:mainTHM:hexagon} directly follows from \cref{lem:hexneg,lem:hexpos}.
As for the part~\ref{sub:mainTHM:hexprism}, one can actually verify that the following two compositions are both trivial in a similar manner to \cref{lem:hexneg}:
\begin{gather}
\label{eq:prf:mainTHM:PhipsiOPhi}
\KhOf*{\diagRiiiLeftLMRWith{a}{b}{c}}
\xrightarrow{\widehat\Phi_a} \KhOf*{\diagRiiiLeftMLRWith{a}{b}{c}}
\xrightarrow{\psi_{\mathsf O}} \KhOf*{\diagRiiiRightMRLWith{a}{b}{c}}
\xrightarrow{\widehat\Phi_a} \KhOf*{\diagRiiiRightRMLWith{a}{b}{c}}
\quad,\\
\label{eq:prf:mainTHM:PhipsiUPhi}
\KhOf*{\diagRiiiLeftLMRWith{a}{b}{c}}
\xrightarrow{\widehat\Phi_b} \KhOf*{\diagRiiiLeftLRMWith{a}{b}{c}}
\xrightarrow{\psi_{\mathsf U}} \KhOf*{\diagRiiiRightRLMWith{a}{b}{c}}
\xrightarrow{\widehat\Phi_b} \KhOf*{\diagRiiiRightRMLWith{a}{b}{c}}
\quad.
\end{gather}
Indeed, by \eqref{eq:def-R4:psi}, we have
\begin{equation}\label{eq:prf:mainTHM:PhipsiOPhi-mat}
\widehat\Phi_a\psi_{\mathsf O}\widehat\Phi_a
= \begin{bmatrix}
-\widehat\Phi_a\Psi_{\mathsf O+}\widehat\Phi_a & 0 \\
\widehat\Phi_a G_{\mathsf O}\widehat\Phi_a & -\widehat\Phi_a\Psi_{\mathsf O-}\widehat\Phi_a 
\end{bmatrix}
\quad.
\end{equation}
Notice that, seeing \eqref{eq:def-R4:PsiO} and \eqref{eq:R4:GO}, $\Psi_{\mathsf O\pm}$ and $G_{\mathsf O}$ has no component both of whose domain and codomain are states such that the crossings $a$ are smoothed vertically.
This implies that each entry of the matrix~\eqref{eq:prf:mainTHM:PhipsiOPhi-mat} and hence the composition~\eqref{eq:prf:mainTHM:PhipsiOPhi} vanish.
Similarly, the composition~\eqref{eq:prf:mainTHM:PhipsiUPhi} vanishes.
Consequently, we obtain
\[
\widehat\Phi_a\psi_{\mathsf O}\widehat\Phi_a - \widehat\Phi_b\psi_{\mathsf U}\widehat\Phi_b = 0 - 0 = 0
\quad,
\]
which is exactly the required equation.
\end{proof}

\section{Proof of Main~Theorem~B}\label{sec:proof4T}

We finally prove the categorified $4T$-relation \cref{cat-4T}.
Recall that the Khovanov complex on a singular point is defined as the mapping cone of the crossing-change morphism $\widehat\Phi$ as in \eqref{eq:Phihat-def}.
Thus, the homotopy commutative squares~\eqref{eq:R4} induce chain homotopy equivalences
\begin{equation}\label{eq:R4-equiv}
\RMove4^{\mathsf O}: \KhOf*{\diagCrossSingRivOL} \xrightarrow\simeq \KhOf*{\diagCrossSingRivOR}
\ ,\quad
\RMove4^{\mathsf U}: \KhOf*{\diagCrossSingRivUL} \xrightarrow\simeq \KhOf*{\diagCrossSingRivUR}
\quad.
\end{equation}
We assert that the diagram below commutes strictly:
\begin{equation}\label{eq:hexagon-sing}
\begin{tikzcd}[column sep={3.5em,between origins}, row sep={5pc,between origins}]
& \KhOf*{\diagCrossSingRivLNNWith{a}{b}} \ar[rr,"\widehat\Phi_b"] \ar[dl,"\widehat\Phi_a"'] && \KhOf*{\diagCrossSingRivULWith{a}{b}} \ar[dr,"{\RMove4^{\mathsf U}}"] & \\
\KhOf*{\diagCrossSingRivOLWith{a}{b}} \ar[dr,"{\RMove4^{\mathsf O}}"'] &&&& \KhOf*{\diagCrossSingRivURWith{a}{b}} \ar[dl,"\widehat\Phi_b"] \\
& \KhOf*{\diagCrossSingRivORWith{a}{b}} \ar[rr,"\widehat\Phi_a"] && \KhOf*{\diagCrossSingRivRPPWith{a}{b}} &
\end{tikzcd}
\quad.
\end{equation}
Notice that the counter-clockwise and clockwise compositions are respectively induced by the following homotopy commutative squares:
\begin{equation}\label{eq:hexagon-cntclock-vs-clock}
\begin{tikzcd}[column sep=4em]
\KhOf*{\diagRiiiLeftLMRWith{a}{b}{c}} \ar[r,"{\widehat\Phi_a\RMove3^{\mathsf O-}\widehat\Phi_a}"] \ar[d,"{\widehat\Phi_c}"'] \ar[dr,dashed,"{\widehat\Phi_a\psi_{\mathsf O}}\widehat\Phi_a" description] & \KhOf*{\diagDeltaBrDRightWith{a}{b}{c}} \ar[d,"{\widehat\Phi_c}"] \\
\KhOf*{\diagDeltaBrDLeftWith{a}{b}{c}} \ar[r,"{\widehat\Phi_a\RMove3^{\mathsf O+}\widehat\Phi_a}"] & \KhOf*{\diagRiiiRightRMLWith{a}{b}{c}}
\end{tikzcd}
\ ,\quad
\begin{tikzcd}[column sep=4em]
\KhOf*{\diagRiiiLeftLMRWith{a}{b}{c}} \ar[r,"{\widehat\Phi_b\RMove3^{\mathsf U-}\widehat\Phi_b}"] \ar[d,"{\widehat\Phi_c}"'] \ar[dr,dashed,"{\widehat\Phi_b\psi_{\mathsf U}}\widehat\Phi_b" description] & \KhOf*{\diagDeltaBrDRightWith{a}{b}{c}} \ar[d,"{\widehat\Phi_c}"] \\
\KhOf*{\diagDeltaBrDLeftWith{a}{b}{c}} \ar[r,"{\widehat\Phi_b\RMove3^{\mathsf U+}\widehat\Phi_b}"] & \KhOf*{\diagRiiiRightRMLWith{a}{b}{c}}
\end{tikzcd}
\quad.
\end{equation}
\Cref{mainTHM} asserts that they are actually identical, so the diagram~\eqref{eq:hexagon-sing} commutes.

We are now ready to prove \cref{cat-4T}.

\begin{proof}[Proof of \cref{cat-4T}]
Since the diagram~\eqref{eq:hexagon-sing} is commutative, we have the following commutative diagram of complexes in $\Cob$:
\begin{equation*}
\begin{tikzcd}[column sep=.5em, row sep=3ex]
\KhOf*{\diagCrossSingRivLNNWith{a}{b}} \ar[rr,equal] \ar[dr,"{\widehat\Phi_b}"] \ar[dd,"{\RMove4^{\mathsf O}\widehat\Phi_a}"'] && \KhOf*{\diagCrossSingRivLNNWith{a}{b}} \ar[rr,equal] \ar[dr,"{\widehat\Phi_b}"] \ar[d,dash] && \KhOf*{\diagCrossSingRivLNNWith{a}{b}} \ar[dr,"{\RMove4^{\mathsf U}\widehat\Phi_b}"] \ar[d,dash] \\
& \KhOf*{\diagCrossSingRivULWith{a}{b}} \ar[rr,equal] \ar[dd,"{\widehat\Phi_b\RMove4^{\mathsf U}}" pos=0.75] & \phantom{X} \ar[d,"{\widehat\Phi_a}"] & \KhOf*{\diagCrossSingRivULWith{a}{b}} \ar[rr,"{\RMove4^{\mathsf U}}"' pos=0.75] \ar[dd,"{\widehat\Phi_b\RMove4^{\mathsf U}}" pos=0.75] & \phantom{X} \ar[d,"{\widehat\Phi_a}"'] & \KhOf*{\diagCrossSingRivURWith{a}{b}} \ar[dd,"{\widehat\Phi_b}"] \\
\KhOf*{\diagCrossSingRivORWith{a}{b}} \ar[dr,"{\widehat\Phi_a}"'] & \phantom{X} \ar[l,"{\RMove4^{\mathsf O}}"'] \ar[r,dash] & \KhOf*{\diagCrossSingRivOLWith{a}{b}} \ar[r,equal] \ar[dr,"{\widehat\Phi_a\RMove4^{\mathsf O}}"'] & \phantom{X} \ar[r,equal] & \KhOf*{\diagCrossSingRivOLWith{a}{b}} \ar[dr,"{\widehat\Phi_a\RMove4^{\mathsf O}}"'] \\
& \KhOf*{\diagCrossSingRivRPPWith{a}{b}} \ar[rr,equal] && \KhOf*{\diagCrossSingRivRPPWith{a}{b}} \ar[rr,equal] && \KhOf*{\diagCrossSingRivRPPWith{a}{b}}
\end{tikzcd}
\quad.
\end{equation*}
This yields the following zigzag of chain homotopy equivalences between total complexes:
\begin{equation}\label{eq:prf:cat-4T:total}
\begin{split}
\operatorname{Tot}\left(
\begin{tikzcd}[ampersand replacement=\&]
\KhOf*{\diagCrossSingRivLNNWith{a}{b}} \ar[r,"{\widehat\Phi_b}"] \ar[d,"{\RMove4^{\mathsf O}\widehat\Phi_a}"'] \& \KhOf*{\diagCrossSingRivULWith{a}{b}} \ar[d,"{\widehat\Phi_b\RMove4^{\mathsf U}}"] \\
\KhOf*{\diagCrossSingRivORWith{a}{b}} \ar[r,"{\widehat\Phi_a}"] \& \KhOf*{\diagCrossSingRivRPPWith{a}{b}}
\end{tikzcd}
\right)
&\xleftarrow{\simeq}
\operatorname{Tot}\left(
\begin{tikzcd}[ampersand replacement=\&]
\KhOf*{\diagCrossSingRivLNNWith{a}{b}} \ar[r,"{\widehat\Phi_b}"] \ar[d,"{\widehat\Phi_a}"'] \& \KhOf*{\diagCrossSingRivULWith{a}{b}} \ar[d,"{\widehat\Phi_b\RMove4^{\mathsf U}}"] \\
\KhOf*{\diagCrossSingRivOLWith{a}{b}} \ar[r,"{\widehat\Phi_a\RMove4^{\mathsf O}}"] \& \KhOf*{\diagCrossSingRivRPPWith{a}{b}}
\end{tikzcd}
\right)
\\
&\xrightarrow{\simeq}
\operatorname{Tot}\left(
\begin{tikzcd}[ampersand replacement=\&]
\KhOf*{\diagCrossSingRivLNNWith{a}{b}} \ar[r,"{\RMove4^{\mathsf U}\widehat\Phi_b}"] \ar[d,"\widehat\Phi_a"'] \& \KhOf*{\diagCrossSingRivURWith{a}{b}} \ar[d,"\widehat\Phi_b"] \\
\KhOf*{\diagCrossSingRivOLWith{a}{b}} \ar[r,"{\widehat\Phi_a\RMove4^{\mathsf O}}"] \& \KhOf{\diagCrossSingRivRPPWith{a}{b}}
\end{tikzcd}
\right)
\quad.
\end{split}
\end{equation}
Note that, since the total complexes are isomorphic to the two-fold mapping cones, the first and the last complexes in \eqref{eq:prf:cat-4T:total} are respectively isomorphic to the mapping cones of the following forms:
\[
\operatorname{Cone}\left(\KhOf*{\diagFourTLND}\to\KhOf*{\diagFourTRDP}\right)
\ ,\quad
\operatorname{Cone}\left(\KhOf*{\diagFourTLDN}\to\KhOf*{\diagFourTRPD}\right)
\quad.
\]
By \eqref{eq:prf:cat-4T:total}, they are chain homotopic to each other, and this completes the proof.
\end{proof}

\section*{Acknowledgments}
The authors would like to thank Professor Keiichi Sakai for his comments.  
The work was partially supported by JSPS KAKENHI Grant Numbers JP20K03604, JP22K03603 and Toyohashi Tech Project of Collaboration with KOSEN.  

\bibliographystyle{plain}
\bibliography{reference}

\end{document}